\documentclass[12pt]{amsart}

\title[Entangled quantum states via
geometric invariant theory]{Existence of locally maximally
entangled quantum states \\ via geometric invariant theory}
\author{Jim Bryan, Zinovy Reichstein, and Mark Van Raamsdonk}

\date{\today} \address{
Department of Mathematics\\
University of British Columbia \\
Room 121, 1984 Mathematics Road  \\
Vancouver, B.C., Canada V6T 1Z2
}

\address{
Department of Physics and Astronomy\\
University of British Columbia \\
6224 Agricultural Road  \\
Vancouver, B.C., Canada V6N 3T8
}

\thanks{Bryan and Reichstein were partially supported
by grants from the National Science and Engineering Council
of Canada (NSERC).  Van Raamsdonk was partially supported
by grants from NSERC and the Simons Foundation.}

%%%%%% \usepackage{diagrams}
% \usepackage{eepic,epic}
%%%%%% \usepackage{mathtools}

%tikz
%\usepackage{tikz}
%\usepackage{pgfplots}
%

% tikz stuff:
%\usepackage[utf8]{inputenc}
%\usepackage{fullpage}
%\usepackage{fourier}
%\usepackage{tikz}
%\usepackage{verbatim}
%\usepackage{pdfpages}

%\usepackage[active,tightpage]{preview}
%\PreviewEnvironment{tikzpicture}
%\setlength\PreviewBorder{5pt}%
%\usetikzlibrary{calc,3d}
% http://en.wikibooks.org/wiki/LaTeX/Colors
% I think you need to define the colors before the usepackage command
% I think the table option will make these new colors available the same
% way 'red', 'blue', etc. normally are

%\definecolor{tealgreen}{HTML}{1B9E77}
%\definecolor{orange}{HTML}{D95F02}
%\definecolor{purple}{HTML}{7570B3}
%\definecolor{pink}{HTML}{E7298A}
%\definecolor{grassgreen}{HTML}{66A61E}
%\definecolor{goldyellow}{HTML}{E6AB02}
%\definecolor{brown}{HTML}{A6761D}
%\definecolor{devilgray}{HTML}{666666}

\textwidth 16cm
\textheight 22cm
\headheight 0.5cm
\evensidemargin 0.3cm
\oddsidemargin 0.2cm

\usepackage{graphicx}
\usepackage{verbatim}
\usepackage{amsmath,amsthm,amsfonts}
\usepackage{amssymb}
\usepackage{times}
\usepackage{enumitem}
\usepackage[all]{xy}
\usepackage{esvect}
\newtheorem{theorem}{Theorem}[section]
\newtheorem{proposition}[theorem]{Proposition}

\newtheorem{lemma}[theorem]{Lemma}
\newtheorem{corollary}[theorem]{Corollary}

\theoremstyle{definition}

\newtheorem{def-theorem}[theorem]{Definition-Theorem}
\newtheorem{remark}[theorem]{Remark}
\newtheorem{definition}[theorem]{Definition}
\newtheorem{example}[theorem]{Example}

\newcommand{\be}{\begin{equation}}
\newcommand{\ee}{\end{equation}}
\newcommand{\bea}{\begin{eqnarray}}
\newcommand{\eea}{\end{eqnarray}}
\newcommand{\beas}{\begin{eqnarray*}}
\newcommand{\eeas}{\end{eqnarray*}}
\newcommand{\ba}{\begin{array}}
\newcommand{\ea}{\end{array}}

\newcommand{\CC} {\mathbb{C}}          % complex numbers
		% natural numbers
		% real numbers
		% integers
		% rationals
\newcommand{\PP} {\mathbb{P}}
\renewcommand{\AA} {\mathbb{A}}

\newcommand\INTO{\ar@{^{(}->}[r]}

\newcommand{\Lie}{\operatorname{Lie}}

\newcommand{\Span}{\operatorname{Span}}
\newcommand{\Mat}{\operatorname{M}}

\newcommand{\GL}{\operatorname{GL}}
\newcommand{\SL}{\operatorname{SL}}
\newcommand{\SU}{\operatorname{SU}}
\newcommand{\Tr}{\operatorname{Tr}}
\newcommand{\tr}{\operatorname{tr}}

\newcommand{\trdeg}{\operatorname{trdeg}}

\newcommand{\Proj}{\operatorname{Proj}}

\renewcommand{\emptyset}{\varnothing}
\renewcommand{\hat}{\widehat}

\newcommand{\identity}{1\!\!1}
\newcommand{\GITquot}{/\!/}
\newcommand{\lcm}{\operatorname{lcm}}
\newcommand{\dvec}{\mathbf{d}}
\newcommand{\avec}{\mathbf{a}}
\newcommand{\evec}{\mathbf{e}}

\newcommand{\gmax}{g_{\mathsf{max}}}

\newcommand{\presectionspace}{\vspace{0.2cm}} % ***** set this to zero
				              % if possible. It is
					      % there just to fix a
					      % stupid spacing issue.****

 % ****** this is also here
				                   % purely for spacing issues

\begin{document}

\begin{abstract}

We study a question which has natural interpretations both in quantum
mechanics and in geometry. Let $V_{1},\dotsc , V_{n}$ be complex vector
spaces of dimension $d_{1},\dots ,d_{n}$ and let $G= \SL_{d_{1}}
\times \dots \times \SL_{d_{n}}$. Geometrically, we ask: Given
$(d_{1},\dots ,d_{n})$, when is the geometric invariant theory
quotient $\mathbb{P}(V_{1}\otimes \dots \otimes V_{n})\GITquot G$
non-empty?  This is equivalent to the quantum mechanical question of
whether the multipart quantum system with Hilbert space $V_{1}\otimes
\dots \otimes V_{n}$ has a locally maximally entangled state, i.e. a
state such that the density matrix for each elementary subsystem
is a multiple of the identity. We show that the answer to this
question is yes if and only if $R(d_{1},\dotsc ,d_{n})\geqslant 0$ where
\[
R(d_{1},\dotsc ,d_{n}) = \prod_{i}d_{i} +\sum_{k=1}^{n}
(-1)^{k}\sum_{1\leqslant i_{1}<\dotsb <i_{k}\leqslant n}
\left(\gcd(d_{i_{1}},\dotsc ,d_{i_{k}}) \right)^{2}.
\]
We also provide a simple recursive algorithm which determines the answer to the
question, and we compute the dimension of the resulting quotient in
the non-empty cases.
\end{abstract}

\maketitle

\presectionspace
\section{Introduction}

In a multipart quantum system, the space of pure states is described
by a tensor product Hilbert space
\[
V = V_{1}\otimes \dots \otimes V_{n},
\]
where $V_{i}$ are $d_i$-dimensional Hilbert spaces describing the
elementary subsystems in isolation\footnote{As usual, we require the
states $\psi$ to be normalized $(\psi,\psi) = 1$ and identify states
which are related by an overall phase $\psi \sim e^{i \theta}
\psi$. Thus, we can identity states $\psi$ with points in the
projective space $\mathbb{P}(V)$.}$^{,}$\label{page one}\footnote{\label{footnote:
allow di=1 but need two non-trivial di}It is natural to assume
that $d_i \geqslant 2$ for every $i$; however, we will allow trivial
subsystems (with $d_i = 1$), as long as there at least two subsystems
of dimension $d_i \geqslant 2$.  }.
Given a pure state $\psi \in V$, the associated state of the $i$th
elementary subsystem is described by the {\it reduced density
operator} $\rho_i : V_i \to V_i$, a nonnegative unit-trace Hermitian
operator defined by the action of the contraction map $V_1 \otimes
\cdots \otimes V_n \otimes V_1^* \otimes \cdots \otimes V_n^* \to V_i
\otimes V_i^*$ on the operator $\rho_\psi = \psi \otimes \psi^* \in V
\otimes V^*$. Equivalently, we have $\rho_i = \tr_{V_1 \otimes \cdots
\otimes \hat{V}_i \otimes \cdots \otimes V_n} \rho_\psi$.

In general, the structure of entanglement in a multipart quantum
system is related to the eigenvalue spectra of the reduced density
matrices for subsystems. A subsystem $i$ is {\it entangled} with the
rest of the system if its spectrum is different from $\{1,0,0,\dots\}$
(i.e. if its density operator is not a projection operator associated
with a single state). A subsystem $i$ is {\it maximally entangled}
with the rest of the system if all eigenvalues of $\rho_i$ are equal
i.e. $\rho_i = \identity/d_i$. For some $\{d_1,\cdots,d_n\}$ it is
possible to find states where each elementary subsystem has maximal
entanglement:
\begin{definition}
A pure state $\psi$ in a multipart quantum system described by Hilbert
space $V_{1}\otimes \dots \otimes V_{n}$ is said to be {\it locally
maximally entangled} if for each elementary subsystem $i$, the density
operator $\rho_i$ is a multiple of the identity operator, $\rho_i =
\identity/d_i$.
\end{definition}
States that are locally maximally entangled (LME) have many
applications in the field of quantum information theory and quantum
computing. For example, this property is present in Bell states, GHZ
states, quantum error correcting code states, cluster states, and
graph states.

In this paper, our goal is to understand for which
$\{d_1,\cdots,d_n\}$ such LME states exist, and in those cases to
characterize the subset $V_{LME} \subset V$ of such states in the full
Hilbert space.  While the necessary conditions $d_i \leqslant \prod_{j
\ne i} d_j$ are well known and have been suggested to be sufficient,
we will see that the necessary and sufficient condition is
significantly more complicated.

\subsection*{Relation to geometry:}

Remarkably, the quantum mechanics problem we have described is equivalent
to two very natural problems in geometry, the first related to symplectic
geometry and the second related to algebraic geometry and geometric
invariant theory; see A.~A.~Klyachko~\cite{Kly02}, \cite[\S~3]{Kly07},
and N.~R.~Wallach~\cite[\S~4]{wallach}.

Geometrically, the full space of pure states $\mathbb{P}(V)$ admits
a natural symplectic structure defined by the Fubini-Study
symplectic form. Consider
the action of $K = \SU_{d_1} \times \dots \times \SU_{d_n}$
on $\mathbb{P}(V)$, where $\SU_{d_i}$ acts via the fundamental
representation on $V_i$. The spectra of the density operators
$\rho_i$ are invariant under the action of $K$, so we can group
LME states into equivalence classes defined by the $K$ orbits.
Since the symplectic form is invariant under the action of $K$,
we can define a moment map $\mu : \mathbb{P}(V) \to \mathfrak{k}^*$
from our space to the dual of the Lie algebra of $K$. The relation
to symplectic geometry is provided by the result that
$\psi \in V_{LME}$ if and only if $\mu(\psi) = 0$ (see
section \ref{sec:background}). Thus, the space of equivalence
classes of LME states under $K$ is precisely the symplectic
quotient $\mu^{-1}(0)/K$.

By the Kempf-Ness theorem,\footnote{For a discussion of geometric
invariant theory, symplectic geometry, and the Kempf-Ness theorem, see
\cite{mfk}, or see \cite{Hoskins} for a pedagogical introduction.}
this symplectic quotient is equivalent to the geometric invariant
theory quotient $\mathbb{P}(V) \GITquot G$, where
\[
G = \SL_{d_{1}} \times \dots \times \SL_{d_{n}}
\]
is the complexification of $K$. Thus
for dimensions $\{d_1,\cdots,d_n\}$, we arrive at the central observation:
\bigskip
\begin{quote}
{\it There exist LME states if
and only if the quotient $\mathbb{P}(V)\GITquot G$ is non-empty.}
\end{quote}
\bigskip

Our main result provides a simple numerical criterion for when the
quotient $\PP (V) \GITquot G$  is non-empty, and we give a formula for
the dimension of the non-empty quotients. For
\[
\dvec  = (d_{1},\dotsc ,d_{n})
\]
we define the \emph{expected dimension}
\begin{align*}
\Delta (\dvec ) & = \dim \PP (V) -\dim G\\
& = \prod_{i} d_{i} -1 - \sum_{i} (d_{i}^{2}-1)
\end{align*}
as well as the arithmetic functions
\[
\gmax (\dvec ) = \max_{1\leqslant i<j\leqslant n}(\gcd (d_{i},d_{j}))
\]
and
\[
R(\dvec  )  = \prod_{i} d_{i} +\sum_{k=1}^{n}
(-1)^{k} G_{k}(\dvec  ) \, ,
\]
where
\[
G_{k}(\dvec ) = \sum_{1\leqslant i_{1}<\dotsb <i_{k}\leqslant n}
\left(\gcd \left(d_{i_{1}},\dotsc ,d_{i_{k}} \right) \right)^{2} \, .
\]

Our main theorem is the following:

\begin{theorem}\label{thm:main.theorem,numerical.formulation}
The GIT quotient $\mathbb P(V) \GITquot G$ is non-empty if and only if
$R(\dvec )\geqslant 0$. Moreover:

If $\Delta(\dvec) > -2$, then $R >  0$ and $\dim \PP (V)\GITquot G =
\Delta(\dvec) >  0$.

If $\Delta(\dvec) = -2$, then $R > 0$ and $\dim \PP (V)\GITquot G =
\max (\gmax(\dvec) -3,0)$.

If $\Delta(\dvec) < -2$, then $R \leqslant 0$ and the quotient is a single point for $R=0$ and empty for $R<0$.
\end{theorem}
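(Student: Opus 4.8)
The plan is to translate the non-emptiness question into the Hilbert--Mumford numerical criterion and to analyze it combinatorially. By the Kempf--Ness theorem and Mumford's criterion, $\PP(V)\GITquot G$ is non-empty if and only if there exists a semistable point, i.e.\ a nonzero tensor $\psi \in V_1\otimes\dots\otimes V_n$ such that for every one-parameter subgroup $\lambda$ of $G$, the associated weight $\mu(\psi,\lambda)$ is $\geqslant 0$. Since $G = \prod \SL_{d_i}$, a $1$-PS is given by integer weight vectors on each factor summing to zero, and $\psi$ is semistable precisely when it is not destabilized by any such $\lambda$. The first step is therefore to set up a convenient parametrization of coordinate subspaces and weight data: writing $\psi$ in terms of its support (the set of multi-indices with nonzero coefficient), semistability becomes a purely combinatorial condition on that support, of the form ``for every choice of weights there is a monomial in the support on which the total weight is $\leqslant$ the average.'' I would formalize this as a min-max / LP-duality statement and reduce existence of a semistable $\psi$ to existence of a support set satisfying a balanced-fraction condition on each tensor factor.

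Next I would connect the combinatorics to the arithmetic function $R(\dvec)$ and to $\Delta(\dvec)$. The dimension count $\Delta(\dvec) = \dim\PP(V) - \dim G$ is the expected dimension of the quotient; when $G$ acts with finite stabilizers on a semistable orbit, the quotient has exactly this dimension, so the heart of the ``$\Delta > -2$'' case is to exhibit a semistable (indeed stable) point with finite stabilizer, from which $\dim = \Delta$ and $R > 0$ follow --- the latter presumably via an inclusion--exclusion identity showing $R$ counts, up to sign, a genuine dimension or an Euler-characteristic-type quantity that is forced positive once $\Delta \geqslant -1$. For the borderline case $\Delta(\dvec) = -2$, the stabilizer is generically positive-dimensional, and the residual quotient is governed by the largest pairwise gcd: I would show the semistable locus, after removing the $\prod\SL_{d_i}$-action, retains an effective torus or $\SL_2$-type action whose quotient has dimension $\max(\gmax(\dvec)-3,0)$, presumably by an explicit normal form for tensors in this ``critical'' shape (the two subsystems realizing $\gmax$ giving a $\PP^{\gmax-1}$ worth of moduli modulo a $3$-dimensional group). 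Finally, for $\Delta(\dvec) < -2$ the group is too large: a dimension/weight estimate must show no semistable point exists unless the support can be made extremely sparse, which happens exactly when $R = 0$ and then forces the quotient to be a single (isolated, rigid) point, while $R < 0$ rules out even that.

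The proof should be organized so that one shows $R \geqslant 0 \iff \PP(V)\GITquot G \neq \emptyset$ by two implications: (i) if $R \geqslant 0$, construct an explicit semistable tensor --- likely built recursively in $n$, splitting off one factor at a time and invoking the recursive algorithm mentioned in the abstract, with the gcd terms in $R$ tracking the obstruction contributed by each subset of factors sharing a common divisor; (ii) if a semistable tensor exists, extract from its support, via the combinatorial/LP reduction above, enough inequalities on the $d_i$ and their gcds to force $R \geqslant 0$. The dimension formulas then fall out of the structure of the constructed stable points together with Luna's slice theorem or a direct computation of generic stabilizers in each of the three regimes $\Delta > -2$, $\Delta = -2$, $\Delta < -2$.

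I expect the main obstacle to be implication (ii) and, relatedly, the $\Delta(\dvec) = -2$ case. Showing that \emph{every} semistable tensor forces $R \geqslant 0$ requires controlling all destabilizing $1$-PS simultaneously --- an a~priori infinite family --- and distilling the inclusion--exclusion structure of $R$ (with its alternating sum of squared gcds over all subsets) out of combinatorial data about a single support set; getting the gcd-squared terms to appear on the nose, rather than just some cruder bound, is the delicate point. The $\Delta = -2$ analysis is the other hard part, since there the quotient is neither expected-dimensional nor empty, so one cannot argue purely by dimension count and must instead understand the actual geometry of the semistable locus and its residual symmetry, pinning down precisely why $\gmax(\dvec) - 3$ is the answer and why $R$ is still strictly positive there.
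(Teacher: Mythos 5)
Your proposal is a plan rather than a proof, and the steps it leaves open are precisely the ones that carry all the difficulty; as written it has genuine gaps at every load-bearing point. The paper does not argue via Hilbert--Mumford semistability of an explicit tensor at all. Its engine is the castling transformation $(d_1,\dots,d_{n-1},d_n)\mapsto(d_1,\dots,d_{n-1},P-d_n)$ with $P=d_1\cdots d_{n-1}$, which (by Littelmann's lemma) preserves the GIT quotient and strictly decreases $\sum d_i$, so the problem reduces to three terminal shapes: $d_n>P$ (empty, via a destabilizing $1$-PS), $d_n=P$ (the quotient is a point, the determinant being the only invariant), and $d_n\leqslant P/2$ (handled by Elashvili's classification of stabilizers in general position, which is what actually produces the exceptional families $(1,\dots,1,2,2,2)$ and $(1,\dots,1,2,d,d)$ and the dimension $\dim V-\dim G+\dim S-1$). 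The numerical criterion then follows because $R$, $\Delta$, and $\gmax$ are all invariant under castling and under deleting $1$'s, so one only has to evaluate the sign of $R$ on terminal vectors --- a finite, if fiddly, estimation.

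Measured against this, the concrete gaps in your proposal are: (i) you have no mechanism for making the alternating sum of squared gcds appear. You correctly identify this as ``the delicate point,'' but the gcd-squared terms do not come from any LP-duality or support-set combinatorics; they come from the arithmetic fact that $\gcd(d_{i_1},\dots,d_{i_{k-1}},P-d_n)=\gcd(d_{i_1},\dots,d_{i_{k-1}},d_n)$ under castling, i.e.\ $R$ is a castling invariant. Without the castling recursion (which you mention only in passing and do not establish), there is no route from ``a semistable support set exists'' to ``$R\geqslant 0$.'' (ii) Your treatment of the $\Delta=-2$ case --- ``a $\PP^{\gmax-1}$ worth of moduli modulo a $3$-dimensional group'' --- is a guess with no supporting computation; the actual answer requires knowing that the terminal vector is $(1,\dots,1,2,a,a)$ with $a=\gmax(\dvec)$ and that the generic stabilizer there is an $(a-1)$-dimensional torus, which is a nontrivial input (Elashvili) that your framework does not supply. (iii) Even in the $\Delta>-2$ regime, exhibiting a \emph{stable} point with finite stabilizer for an arbitrary tensor product of fundamental representations is not routine; the paper instead verifies Elashvili's index condition $l(\rho|_H)\geqslant 1$ and reads off that the generic stabilizer is finite except in the listed cases. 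In short, your outline names the right difficulties but resolves none of them, and the one tool that would resolve them --- castling invariance of $R$, $\Delta$, $\gmax$ together with a classification of terminal cases --- is absent.
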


We remark that Theorem~\ref{thm:main.theorem,numerical.formulation}
implies that the quotient is of the expected dimension $\Delta(\dvec
)$ whenever this is non-negative. If the expected dimension is
negative, then the quotient can be empty, a single point, or positive
dimensional.

Our proof of Theorem~\ref{thm:main.theorem,numerical.formulation} is based
on solving the following simple algorithm which
computes the dimension of $\PP (V)\GITquot G$.

\begin{theorem}\label{thm.GIT}
The GIT quotient $\mathbb P(V) \GITquot G$ has
dimension $D(\dvec )$, where $\dvec  = (d_1, \dots, d_n)$ and
$D$ is a function defined on weakly increasing tuples
of integers by the following cases, depending on the
size of $d_n$ relative to $P = d_1 \cdots d_{n-1}$.
\be
\nonumber
\begin{array}{ll}
(a) \; d_n > P & \qquad  D(\dvec ) = -1 . \cr
      &      \cr
(b) \;  d_n = P & \qquad D(\dvec ) = 0 . \cr
      &      \cr
(c) \; \frac{P}{2}
< d_n < P & \qquad D(\dvec ) =
D({\rm sort}(d_1, \dots , d_{n-1}, P - d_n)) . \cr
      &      \cr
(d) \;  d_{n-1} \leqslant d_n \leqslant {\frac{P}{2}} & \qquad  D(\dvec ) = \left\{
\begin{array}{ll} 0 & \dvec  = (1,\dots,1,2,2,2) \cr d-3 & \dvec  = (1,\dots,1,2,d,d) , d \geqslant 3 \cr \Delta (\dvec ) & {\rm otherwise.}
\end{array}
\right.
\end{array}
\ee
In particular, the GIT quotient $\mathbb P(V) \GITquot  G$
is empty if and only if $D(\dvec )=-1$ and
is a single point if and only if $D(\dvec )=0$.
\end{theorem}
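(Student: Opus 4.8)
The plan is to analyze the non-emptiness and dimension of $\PP(V)\GITquot G$ via the Hilbert--Mumford numerical criterion, reducing everything to combinatorics of one-parameter subgroups of $G = \SL_{d_1}\times\cdots\times\SL_{d_n}$. First I would recall that a point $[v]\in\PP(V)$ is semistable (equivalently GIT-stable for the generic stabilizer, since the $\SL$ actions have finite kernel) if and only if for every one-parameter subgroup $\lambda$ of $G$ the Mumford weight $\mu([v],\lambda)$ is nonnegative; by a standard argument this is equivalent to the existence of a single $v$ with all reduced density operators equal to $\identity/d_i$, i.e. to $V_{LME}\neq\emptyset$. A one-parameter subgroup of $G$ is a tuple of weight vectors $(\boldlambda^{(1)},\dots,\boldlambda^{(n)})$ with each $\boldlambda^{(i)}\in\ZZ^{d_i}$ summing to zero, and the weight of $\lambda$ on a basis tensor $e_{j_1}\otimes\cdots\otimes e_{j_n}$ is $\sum_i \lambda^{(i)}_{j_i}$. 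The first substantive reduction is to show that non-emptiness of the quotient is equivalent to non-emptiness of a suitable ``Kronecker-type'' matroid/transversal condition on which multi-indices can be given nonnegative total weight — this is exactly where the cases of the recursion will come from.

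Next I would set up the recursion on $n$ by singling out the last factor. Writing $P = d_1\cdots d_{n-1}$, the space $V$ is $W\otimes V_n$ where $W = V_1\otimes\cdots\otimes V_{n-1}$ has dimension $P$, and $\SL(V_n)$ acts in the standard way on the second factor. Cases (a) and (b) are the classical statement that $\PP(W\otimes V_n)\GITquot(\SL(W)\times\SL(V_n))$ — or rather the analogue with $\SL(W)$ replaced by the subgroup $\SL_{d_1}\times\cdots\times\SL_{d_{n-1}}$ — is empty when $d_n > P$ (a density matrix on a $d_n$-dimensional space obtained by tracing out a $P$-dimensional space cannot have rank $> P$, so it cannot be $\identity/d_n$ unless $d_n\le P$) and is a single point when $d_n = P$ (the maximally entangled state between $W$ and $V_n$ is unique up to the group action, and one checks the $W$-side density matrix is automatically $\identity/P$ precisely because the $d_i$ for $i<n$ themselves multiply to $P$; more care is needed here since we need the finer factorization, so I would prove this by an explicit construction plus a stabilizer/dimension count). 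Case (c), the ``folding'' or duality step $d_n\mapsto P-d_n$, is the heart of the recursion: when $P/2 < d_n < P$, I would exhibit an $\SL(V_n)$-equivariant (up to twist) identification of the semistable locus — and the quotient — with the one for the tuple $(d_1,\dots,d_{n-1},P-d_n)$. The idea is that a generic LME tensor, viewed as a linear map $V_n^*\to W$, has an image of dimension $\min(d_n,P)$; when $d_n$ is close to $P$, the \emph{cokernel} (an $(P-d_n)$-dimensional space of linear relations) carries the essential data, and partial-trace identities translate the LME condition for $d_n$ into the LME condition for $P-d_n$, up to the scalar normalizations — here is where one must check the reduced density operators on the $V_i$, $i<n$, transform correctly, which requires a genuine computation with the Plücker/complementary-minor relations.

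Finally, for the base case (d) with $d_{n-1}\le d_n\le P/2$, I would argue that here the quotient has the expected dimension $\Delta(\dvec)$ except for the two sporadic families. The generic-smoothness direction is an inductive consequence: once $d_n\le P/2$, one shows the moment-map / semistability condition is satisfiable ``with room to spare'' so that $\mu^{-1}(0)$ is a smooth complete intersection of the expected codimension, giving $\dim = \Delta(\dvec)$; the exceptional cases $(1,\dots,1,2,2,2)$ and $(1,\dots,1,2,d,d)$ are precisely the small cases where $\Delta$ undershoots the true dimension, and one handles them by direct analysis (the three-qubit case is classical — the quotient is a point — and $(2,d,d)$ reduces to the $\SL_2\times\SL_d\times\SL_d$ action on $\CC^2\otimes\Mat_d$, whose quotient is the space of a pair of $d\times d$ matrices up to simultaneous $\SL_d\times\SL_d$ action and $\SL_2$ mixing, of dimension $d-3$). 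The main obstacle I anticipate is making Case~(c) rigorous: proving that the duality $d_n\leftrightarrow P-d_n$ induces an isomorphism (not merely a birational or bijective-on-points correspondence) of GIT quotients, with all the scalar normalizations of the other $\rho_i$ correctly accounted for, and checking that it interacts properly with the sorting operation so the recursion is well-defined and terminates. Once (c) is established, (a), (b) and the generic part of (d) are comparatively routine, and the equivalence ``$D(\dvec)=-1\iff$ empty, $D(\dvec)=0\iff$ point'' follows immediately from the case definitions together with the observation that $D$ is either $-1$, $0$, or strictly positive in case (d) outside the sporadic families (which themselves give $D=0$ only for $(1,\dots,1,2,2,2)$).
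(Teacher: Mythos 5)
Your overall architecture matches the paper's: cases (a) and (b) by direct arguments, case (c) by the castling/duality $d_n \mapsto P - d_n$, and case (d) as the base case. Cases (a)--(c) are workable essentially as you sketch them: the paper proves (a) with exactly the one-parameter-subgroup degeneration you allude to, proves (b) by showing $\CC[V]^G = \CC[\det]$ (which cleanly sidesteps your worry about whether the smaller group $\SL_{d_1}\times\cdots\times\SL_{d_{n-1}}$ still acts transitively on maximally entangled states), and for (c) simply cites Littelmann's castling lemma rather than re-deriving the image/cokernel correspondence; the isomorphism of quotients you are worried about is a known result, so this part is a matter of bookkeeping rather than a gap.

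The genuine gap is in case (d), which is in fact the deepest part of the theorem. To get $\dim \PP(V)\GITquot G = \Delta(\dvec)$ you must show that the stabilizer in general position $S$ for the $G$-action on $V$ is finite, since by Rosenlicht's theorem the dimension of the quotient is $\dim V - \dim G + \dim S - 1$; and to justify the word ``otherwise'' you must show that the \emph{only} failures of finiteness in the entire range $d_n \leqslant P/2$ are the families $(1,\dots,1,2,2,2)$ and $(1,\dots,1,2,d,d)$. Your proposed mechanism --- that the moment-map equation is satisfiable ``with room to spare'' so that $\mu^{-1}(0)$ is a smooth complete intersection of expected codimension --- cannot deliver this: the two exceptional families satisfy $d_n \leqslant P/2$ just as well as the generic ones, so no soft transversality argument valid uniformly on that range can be correct. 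The paper's proof observes that $d_n \leqslant P/2$ is precisely the Andreev--Vinberg--Elashvili index condition $l(\rho|_H) \geqslant 1$ for every simple factor $H$ of $G$, and then invokes Elashvili's classification of stabilizers in general position for irreducible representations satisfying that condition to read off $S^0$ (trivial except in the two listed families, where it is a torus of dimension $2$ resp.\ $d-1$). Without this classification, or a substitute argument that both proves generic finiteness of the stabilizer and exhaustively rules out further exceptions for all $n$ and all dimension vectors with $d_n \leqslant P/2$, your case (d) --- and hence the theorem --- is not proved. Your direct computations for $(2,2,2)$ and $(2,d,d)$ are fine as far as they go, but they only establish the values of $D$ on the exceptional families, not that the list of exceptions is complete.
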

In case (c), $D(\dvec )$ is defined recursively
in terms of the value of $D$ on another dimension vector
${\rm sort}(d_1, \dots , d_{n-1}, P - d_n)$, obtained by arranging
the positive integers
\[ d_1, \dots , d_{n-1}, P - d_n \]
in weakly increasing order.  Since the sum of these integers
is strictly less than $\sum_i d_i$, the recursion stops
after at most $\sum_i d_i$ steps. In particular, the function
$D(\dvec )$ is well defined.

Note that the condition that $\PP(V) \GITquot G = \emptyset$
is equivalent to $V$ being a pseudo-homogeneous space for $G$;
see Corollary~\ref{cor1}.  Recall that an $H$-representation $W$
is called a pseudo-homogeneous space if $H$ has a Zariski dense
orbit in $W$.  Pseudo-homogeneous spaces for reductive groups
were classified by M.~Sato\ and\ T.~Kimura~\cite{sato-kimura}.
The vectors $\dvec$ such that $\PP (V) \GITquot G = \emptyset$
can, in principle, be described by appealing to this classification.
Note that the passage from $V$ to $V'$ in Theorem~\ref{thm.GIT}(c)
is an example of what Sato and Kimura called a castling operation.

Alternatively,  the vectors $\dvec$ such that
$\PP(V) \GITquot G = \emptyset$ (or equivalently,
$V \GITquot G$ is a point) can also be described by appealing to
the classification of coregular irreducible representations
$\phi \colon H \to \GL(W)$ such that $H$ is semisimple,
due to P.~Littelmann~\cite{littelmann}.
Here $\phi$ is called coregular if $W \GITquot H$ is an affine space $\AA^m$.
(We are only interested in the cases
where $H = \SL_{d_1} \times \dots \times \SL_{d_n}$,
$V = V_{d_1} \otimes \dots \otimes V_{d_n}$ and $m = 0$.)
Note that the castling operation plays a prominent role
in~\cite{littelmann} as well.

\begin{example} \label{ex.n=2}
For $n=2$, the GIT quotient $\mathbb P(V) \GITquot  G$ is non-empty
if and only if $(d_1,d_2) = (d,d)$.
Indeed,  if $d_2 > d_1$, then part (a) of Theorem~\ref{thm.GIT}
applies, and $\mathbb P(V) \GITquot  G = \emptyset$. If $d_2 = d_1$, then
part (b) applies, and $\mathbb P(V) \GITquot  G$ is single point.
\qed
\end{example}

For $n \geqslant 3$, the situation is considerably more complicated.
For example, the quotient for dimension vectors of the form $(2, d_1,
d_2)$ is nonempty iff $d_1 = d_2 \geqslant 2$ or $d_2/d_1 = (k+1)/k$
with integer $k \geqslant 2$; see Corollary~\ref{cor.d=2}.  The
general characterization of the set of dimension vectors
$(d_{1},d_{2},d_{3})$ which admit LME states is arithmetically
complicated and can be described in terms of generalized Fibonacci
sequences, see~\cite{sam}.

The remainder of this paper is structured as follows. In section
\ref{sec:background}, we provide some additional background on the
quantum mechanics problem and its connection to symplectic geometry and
geometric invariant theory.  Theorem~\ref{thm.GIT} is proved in
sections \ref{sec:GITprelim} and~\ref{sec:proof}.
Our argument does use the above-mentioned classifications
due to Sato-Kimura and Littelmann; however, the proof of part (d)
relies on the work of A.~G.~Elashvili~\cite{elashvili}.
Sections~\ref{sec:analysis} and~\ref{sect.proof-main}
are devoted to analyzing the recursive
algorithm appearing in Theorem \ref{thm.GIT} and proving
Theorem~\ref{thm:main.theorem,numerical.formulation}.

In a companion paper \cite{sam}, we discuss numerous explicit results
and examples of locally maximally entangled states with a view to
applications in quantum information theory.

\section{Background}\label{sec:background}

We begin with a short review of relevant background material on quantum mechanics, the connection to symplectic geometry, and basics of geometric invariant theory.

\subsection*{Density operators and entanglement}

In a multipart quantum system with Hilbert space $V = V_{1}\otimes \dots \otimes V_{d}$, physical observables associated with subsystem $i$ correspond to Hermitian operators $\mathcal{O} : V_i \to V_i$; the expectation value of
the observable in a measurement on a state $\psi_i \in V_i$ is given
by the inner product $(\psi_i,\mathcal{O} \psi_i)$. Any such observable
can be promoted to an observable in the full multipart system;
the associated Hermitian operator $\hat{\mathcal{O}}$ acting on $V$
is defined by $\hat{\mathcal{O}} = \identity \otimes \dots \otimes
\mathcal{O} \otimes \dots  \otimes \identity$.

A crucial feature of multipart quantum systems is that their states are generally {\it entangled}; that is, they cannot be written as products $\psi_1 \otimes \dots \otimes \psi_n$. Furthermore, for $\psi \in V$, there does not generally exist $\psi_i \in V_i$ for which $(\psi_i, \mathcal{O} \psi_i) = (\psi, \hat{\mathcal{O}} \psi)$ for all $\mathcal{O}$ acting on $V_i$. Thus, in the context of a multipart system it is no longer possible to represent the state of an individual subsystem simply as a vector or {\it pure state} in Hilbert space $V_i$. Rather, the subsystem can be described by a {\it density operator}, defined as a non-negative Hermitian operator $\rho_i: V_i \to V_i$ with unit trace. The density operator $\rho_i= \tr_{V_1 \otimes \cdots  \hat{V}_i \cdots \otimes V_n} \rho_\psi$ defined in the introduction is the unique density operator $\rho_i$ such that $(\psi, \hat{\mathcal{O}} \psi) = \tr(\rho_i \mathcal{O})$ for all $\mathcal{O}$ acting on $V_i$.

A subsystem described by a density operator with
eigenvalues/eigenvectors $\{(p_i, \psi_i)\}$ can be interpreted as
being in a statistical ensemble or {\it mixed state} in which we have
state $\psi_i$ with probability $p_i$. This subsystem is entangled
with the rest of the system unless $\{p_i\} = \{1,0,\dots,0\}$. When the density matrix for the subsystem is a
multiple of the identity operator, $\rho_i = \identity/d_i$, the
subsystem is in an equal mixture of all possible states for the
subsystem and we say that the subsystem is {\it maximally mixed} or
{\it maximally entangled} with the rest of the system. The locally
maximally entangled states that we characterize in this paper are
defined by the condition that all elementary subsystems have this
property.

\subsection*{The quantum marginal problem}

The existence question that we consider is a special case of the {\it quantum marginal problem}: which collections of density operators $\{\rho_\alpha\}$ associated with subsystems $\alpha$ of a multipart system can arise from a quantum state of the entire system? In our case where the subsystems are non-overlapping and the state of the full system is assumed to be pure, a general answer to this question has been provided by Klyachko \cite{Kly04} (see \cite{Walter} for a review) via a set of inequalities on the spectra for the density operators, or equivalently, in terms of a criterion expressed in the language of representation theory of the symmetric group. These results provide an in-principle method to answer our question, but one that quickly becomes computationally intractable as the subsystem dimensions increase.

\subsection*{The moment map}

We now briefly review the connection to symplectic geometry. The
Fubini-Study symplectic form on $\mathbb{P}(V)$ is
fixed up to overall scaling by its invariance under $U(V)$
transformations and is thus invariant under $K = \SU (V_{1})
\times \dots \times \SU (V_n)$. The associated moment map $\mu
:\mathbb{P}(V)\to \mathfrak{k}^{*}$ is given
explicitly by
\be \mu(\psi): k \mapsto (\psi, k \psi) \qquad k \in
\mathfrak{k} \; .  \ee
Any $k$ may be written as a linear combination of elements of the form
$\identity \otimes \cdots k_i \otimes \cdots \identity$ with $k_i$ a
traceless, Hermitian operator acting on $V_i$. For an element of this
form, we have
\be
\mu(\psi)(\identity \otimes \cdots k_i \otimes \cdots \identity) =
\tr(\rho_i k_i) \; .
\ee
For $\psi \in V_{LME}$, each $\rho_i$ is proportional to the identity
operator; tracelessness of $k_i$ then implies that the moment map
vanishes. Conversely, $\tr(\rho_i k_i)$ vanishes for arbitrary
traceless Hermitian $k_i$ if and only if $\rho_i$ is proportional to
the identity operator. Thus we have that $V_{LME} = \mu^{-1}(0)$.

\subsection*{The Kempf-Ness theorem and geometric invariant theory}

As discussed above, the space of LME states, up to equivalence, is
given by the symplectic quotient
\[
\mu^{-1}(0)/K.
\]

The Kempf-Ness theorem identifies this space with an algebro-geometric
quotient given by geometric invariant theory which we now briefly
describe. Let
\[
G = % K^{\CC} =
SL(V_{1})\times \dotsb \times SL(V_{n}).
\]
Note that $K$ is a maximal compact subgroup of $G$.
$G$ acts algebraically on $\PP (V_{1}\otimes \dotsb \otimes V_{n}) =
\PP (V)$ and the geometric invariant theory (GIT) quotient $\PP (V)\GITquot
G$ is the projective variety defined by
\[
\PP (V) \GITquot G = \Proj \left(\CC [V]^{G} \right).
\]
Here $\CC [V]$ is the ring of polynomial functions on $V$ (graded by
degree), and $\CC [V]^{G}$ is the subring of invariant
functions. Concretely, $\Proj \left(\CC [V]^{G} \right)$ can be
constructed as follows (cf.~\cite{Reid02}). It is possible to choose homogeneous
generators $f_0, \dots, f_N$ for the ring of invariants $\mathbb
C[V]^G$ (see~\cite[Thm.~1.1]{mfk}), where we define $w_i$ to be the
degree of $f_i$.  Consider the rational\footnote{Here, rational
indicates that $\pi$ is only defined on a dense open set $U^{ss}
\subset \mathbb P^N$. The points of $U^{ss}$ are called
semistable points.} map
\[
\pi \colon \mathbb P(V) \dasharrow
\mathbb P^N_{w_0, \dots, w_N}
\]
given by $v \mapsto (f_0(v): \cdots :f_N(v))$, where $\mathbb P^N_{w_0, \dots, w_N}$ stands for the $N$-dimensional graded
weighted projective space with weights $w_0, \dots, w_N$. Then the closure of the image of $\pi$ is the GIT quotient
$\PP (V)\GITquot G$, and $\pi$ is the quotient map.
Geometrically, the points of $\PP (V)\GITquot G$ correspond to
closed orbits $\mathcal{O}$ of $G$ in $V \setminus \{ 0 \}$,
up to projective equivalence.

In this context, the Kempf-Ness theorem states that there is a
homeomorphism
\[
\mu^{-1}(0)/K \cong \PP (V)\GITquot G
\]
where $\PP (V)\GITquot G$ is given the complex analytic topology.
The Kempf-Ness theorem thus converts the problem of understanding
$\mu^{-1}(0)/K$, the
space of equivalence classes of LME states, into the purely algebraic
problem of understanding the GIT quotient $\PP (V)\GITquot G$. We
study this quotient in depth in sections ~\ref{sec:GITprelim} and
\ref{sec:proof} .

\section{Invariant-theoretic preliminaries} \label{sec:GITprelim}

\subsection*{Notational conventions}
In the sequel we will denote the $\mathbb C$-algebra of regular functions
on complex affine algebraic variety $X$ by $\mathbb C[X]$.
If $X$ is irreducible (but not necessarily affine), then
$\CC(X)$ will denote the field of rational functions on $X$.

Recall $\dim(X) = \trdeg_{\mathbb C} (\mathbb C(X))$.
Here $\dim(X)$ denotes the dimension of $X$ and
$\trdeg_{\mathbb C} (\mathbb C(X))$ denotes the transcendence degree
of $\CC(X)$ over $\CC$, i.e., the maximal number of elements
$f_1, \dots, f_n \in \CC(X)$ which are algebraically independent over $\CC$.

Finally, if $G$ is a complex algebraic group acting on $X$, we will
denote the ring of $G$-invariant regular functions on $X$ by $\mathbb C[X]^G$
and the field of $G$-invariant rational functions by $\mathbb C(X)^G$.

\subsection*{Stabilizers in general position}
Let $G$ be a reductive complex linear algebraic group and
$\rho \colon G \to \GL(V)$ be a linear representation. By a theorem of
Richardson~\cite{richardson}, the action of $G$ on $V$ has a stabilizer $S$
in general position. That is, there exists a closed
subgroup $S \subset G$ and a $G$-invariant  dense open
subset $U \subset V$ such that the stabilizer $G_x$ is conjugate to $S$
for any $x \in U$.  Note that here $S$ is uniquely defined by $\rho$
up to conjugacy and $\dim(G \cdot x) = \dim(G) - \dim(S)$
for every $x \in U$.

\begin{lemma} \label{lem.prel0}
Let $G$ be a semisimple linear algebraic group and $G \to \GL(V)$ be a
finite-dimensional representation of $G$. Let $\CC [V]$ be the ring of
polynomial functions on $V$ and let $\CC (V)$ be the field of rational
functions. Then

\smallskip
(a) $\mathbb C(V)^G$ is the field of fractions of
$\mathbb C[V]^G$. In other words, every $G$-invariant rational function on
$V$ is a ratio of two $G$-invariant polynomials.

\smallskip
(b) The dimension of the GIT quotient $\mathbb P(V)\GITquot G$ equals
$\dim(V) - \dim(G) + \dim(S) - 1$.

\smallskip
(c) If $S$ is reductive, then $\mathbb P(V)\GITquot G \neq \emptyset$.
\end{lemma}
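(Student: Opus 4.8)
The plan is to deduce all three parts from standard facts about invariant rings of semisimple (more generally, reductive) groups together with the theory of stabilizers in general position. For part (a), the key input is that $G$ has no nontrivial characters (being semisimple, its abelianization is finite, hence trivial as an algebraic group since it is connected — or one invokes that $[\GG,\GG]=\GG$). Consequently, given a $G$-invariant rational function $h = p/q$ on $V$ with $p,q \in \CC[V]$ coprime, the equation $g\cdot p / g\cdot q = p/q$ forces $g\cdot p = \chi(g)\, p$ and $g\cdot q = \chi(g)\, q$ for a character $\chi$ of $G$ (comparing zero loci and using unique factorization in $\CC[V]$); since $\chi$ is trivial, $p$ and $q$ are themselves $G$-invariant. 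I would write this out carefully, as it is the cleanest route and is exactly where semisimplicity (as opposed to mere reductivity) is used.

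For part (b), I would combine (a) with the identification of $\PP(V)\GITquot G$ as $\Proj(\CC[V]^G)$ and the general principle that the dimension of a projective variety $\Proj(R)$ for a graded domain $R$ is $\trdeg_\CC(R) - 1$, equivalently $\dim\Spec(R) - 1$. By (a), the fraction field of $\CC[V]^G$ is $\CC(V)^G$, so $\trdeg_\CC \CC[V]^G = \trdeg_\CC \CC(V)^G$. Now the Rosenlicht-type theorem on rational quotients (or a direct transcendence-degree count) gives $\trdeg_\CC \CC(V)^G = \dim(V) - \max_{x} \dim(G\cdot x)$, and by Richardson's theorem the generic orbit dimension is $\dim(G) - \dim(S)$. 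Putting these together yields $\dim \PP(V)\GITquot G = \dim(V) - \dim(G) + \dim(S) - 1$. One minor point to address is the edge case where $\CC[V]^G = \CC$ (only constants), in which case the $\Proj$ is empty and the formula should be read as giving $-1$; this happens precisely when $\dim(V) = \dim(G) - \dim(S)$, i.e. there is a dense orbit, and I would flag it so that the bookkeeping with "$\dim(\emptyset) = -1$" is consistent.

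For part (c), the goal is to show $\CC[V]^G \neq \CC$ under the hypothesis that $S$ is reductive, which by the description of $\Proj$ is exactly the condition for $\PP(V)\GITquot G$ to be nonempty. Here I would use the Luna slice theorem / Matsushima-type criterion: pick a point $x$ in the open set $U$ with $G_x = S$ reductive. Then by a theorem of Matsushima, the orbit $G\cdot x$ is affine (equivalently closed in $V$, after possibly replacing $x$ — more precisely, reductivity of the stabilizer of a generic point lets one find a closed orbit of the same dimension, or one argues directly that a generic fiber of the quotient map contains a closed orbit whose stabilizer is conjugate to $S$). A closed orbit $G\cdot x$ of positive dimension in $V\setminus\{0\}$ — positive because $G$ is semisimple and $V$ has no invariant vectors unless the representation is trivial (the case $\dim V \le 1$ being vacuous or handled separately) — is separated from $0$ by some homogeneous invariant $f$ with $f(x)\neq 0$, so $\CC[V]^G \neq \CC$.

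The main obstacle I anticipate is part (c): passing from "the generic stabilizer $S$ is reductive" to "some generic orbit is closed" requires care. The cleanest argument is probably via Luna's slice theorem applied at a point of a closed orbit in a generic fiber of $V \to V\GITquot G$: the slice representation has trivial generic quotient, and comparing with the generic stabilizer forces the closed-orbit stabilizer to be (conjugate to) $S$ when $S$ is reductive; conversely if no closed orbit other than $\{0\}$ existed then $\CC[V]^G=\CC$ and the closed orbit $\{0\}$ would have stabilizer $G$, not $S$, giving a contradiction once one knows $S\neq G$ (which holds since the generic orbit is not a point — again using that $V$ is not the trivial representation). I would structure the proof of (c) around this dichotomy.
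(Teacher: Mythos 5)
Your parts (a) and (b) are correct and follow the paper's own proof essentially verbatim: (a) is the standard unique-factorization-plus-no-nontrivial-characters argument (the paper just cites \cite{popov} and \cite{popov-vinberg} for this fact), and (b) is exactly the chain $\dim \Proj(\CC[V]^G) = \trdeg_\CC \CC[V]^G - 1 = \trdeg_\CC \CC(V)^G - 1$ followed by Rosenlicht's theorem for the transcendence degree and Richardson's theorem for $\dim(G_x)=\dim(S)$, including the bookkeeping convention $\dim(\emptyset)=-1$.

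Part (c) is where you diverge from the paper and where there is a genuine gap. The paper simply invokes Popov's stability criterion \cite[Theorem 1]{popov}: if the stabilizer in general position is reductive, then there exists a nonzero point with closed orbit; since invariants separate closed orbits, $\CC[V]^G \neq \CC$. You instead try to manufacture the closed orbit from Matsushima's criterion, but Matsushima only gives that the generic orbit $G \cdot x \cong G/S$ is an \emph{affine variety}, not that it is \emph{closed} in $V$, and affine non-closed orbits do exist. The step ``reductivity of the stabilizer of a generic point lets one find a closed orbit of the same dimension'' is precisely the content of Popov's theorem, which you are assuming rather than proving. Your fallback dichotomy is also a non sequitur as written: if the only closed orbit were $\{0\}$, the observation that $\operatorname{Stab}(0) = G \neq S$ contradicts nothing, because $S$ is the stabilizer of a \emph{generic} point, not of a point on the unique closed orbit. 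The dichotomy can, however, be repaired: if $\CC[V]^G = \CC$, then by (a) and Rosenlicht there is a dense orbit $G \cdot x \cong G/S$, which is affine by Matsushima since $S$ is reductive; its complement is nonempty (as $0 \notin G\cdot x$) and hence pure of codimension one, so it is the zero locus of a single polynomial $f$ (as $\CC[V]$ is a UFD), which is a semi-invariant and therefore an invariant because $G$ is semisimple --- contradicting $\CC[V]^G = \CC$. Either supply that argument in full or cite Popov's theorem as the paper does.
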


Note that $\dim(\mathbb P(V)\GITquot G) = -1$ if and only if
$\mathbb P(V)\GITquot G = \emptyset$.

\begin{proof} Part (a) is an easy consequence of the fact that
the polynomial ring $\mathbb C[V]$ is a unique factorization domain;
see, e.g.~\cite[Lemma 1]{popov} or \cite[Theorem 3.3]{popov-vinberg}.

\smallskip
(b) Recall that by definition, $\mathbb P(V)\GITquot G = \Proj \mathbb C[V]^G$.
Thus
\[ \dim \, P(V)\GITquot G = \trdeg_{\mathbb C} \mathbb C[V]^G - 1 =
\trdeg_{\mathbb C} \mathbb C(V)^G - 1 \, . \]
By Rosenlicht's theorem~\cite[Theorem 2]{rosenlicht}, the
transcendence degree of $\CC (V)^{G}$ is given by
\[ \trdeg_{\mathbb C} \mathbb C(V)^G = \dim(V) - \dim (G \cdot x) =
\dim(V) - (\dim(G) - \dim(G_x)), \]
where $x \in V$ is a point in general position; see also
\cite[Corollary, p.~156]{popov-vinberg}. Now recall that
$G_x$ is conjugate to $S$, so
$\dim(G_x) = \dim(S)$, and part (b) follows.

\smallskip
(c) Suppose $S$ is reductive. Then by a theorem of
Popov~\cite[Theorem 1]{popov}, there exists a $0 \neq x \in V$ such that
the orbit $G \cdot x$ is closed. By a theorem of Hilbert, $G$-invariant
polynomials separate closed orbits in $V$; (see,
e.g.,~\cite[Corollary 1.2]{mfk}).
In particular, there exists an $p \in \mathbb C[V]^G$ such that
$p(x) \neq p(0)$.  This shows that $\mathbb C[V]^G \neq \mathbb C$
and thus $\mathbb P(V)\GITquot G = \Proj \, \mathbb C[V]^G \neq \emptyset$.
\end{proof}

\begin{corollary} \label{cor1}
{\rm (}cf. \cite[\S 2, Propositions 2 and 3]{sato-kimura}{\rm )}
Let $G$ be a semisimple linear algebraic group and $G \to \GL(V)$
be a linear representation of $G$.  Then the following conditions
are equivalent.

\smallskip
(a) $G$ has a dense orbit in $V$,

\smallskip
(b) $\mathbb C(X)^G = \mathbb C$,

\smallskip
(c) $\mathbb C[X]^G = \mathbb C$,

\smallskip
(d) The GIT quotient $\mathbb P(V)\GITquot G$ empty.
\end{corollary}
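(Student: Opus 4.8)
The plan is to deduce all four equivalences from Lemma~\ref{lem.prel0} together with Rosenlicht's theorem, organizing the argument as the chain (a) $\Leftrightarrow$ (b) $\Leftrightarrow$ (c) $\Leftrightarrow$ (d) (reading $X = V$ throughout). I would begin with (c) $\Leftrightarrow$ (d), the most formal step: since $\mathbb{P}(V)\GITquot G = \Proj\,\mathbb{C}[V]^G$ by definition and $\mathbb{C}[V]^G$ is a finitely generated graded domain, its $\Proj$ is empty if and only if it has no homogeneous elements of positive degree, i.e. if and only if $\mathbb{C}[V]^G = \mathbb{C}$. Equivalently, one can cite Lemma~\ref{lem.prel0}(b): the quotient is empty exactly when $\dim \mathbb{P}(V)\GITquot G = -1$, i.e. when $\trdeg_{\mathbb{C}}\mathbb{C}[V]^G = 0$; since $\mathbb{C}[V]^G$ is finitely generated and $\mathbb{C}$ is algebraically closed, this forces $\mathbb{C}[V]^G = \mathbb{C}$.

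Next, (b) $\Leftrightarrow$ (c). The direction (b) $\Rightarrow$ (c) is immediate because $\mathbb{C}[V]^G \subseteq \mathbb{C}(V)^G$. For (c) $\Rightarrow$ (b) I would invoke Lemma~\ref{lem.prel0}(a), which asserts that $\mathbb{C}(V)^G$ is the field of fractions of $\mathbb{C}[V]^G$: if the latter equals $\mathbb{C}$, so does the former. This is the one place where semisimplicity of $G$ is genuinely used, since it is what rules out invariant rational functions that are not ratios of invariant polynomials.

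Finally, (a) $\Leftrightarrow$ (b). For (a) $\Rightarrow$ (b): if $G\cdot x$ is dense in $V$, then the orbit map $\varphi\colon G \to V$, $g \mapsto g\cdot x$, is dominant, so any $f \in \mathbb{C}(V)^G$ pulls back to a rational function $\varphi^* f$ on $G$ satisfying $(\varphi^* f)(hg) = f(hg\cdot x) = f(g\cdot x) = (\varphi^* f)(g)$; thus $\varphi^* f$ is invariant under left translation and hence constant, so $f$ is constant on a dense subset of $V$ and therefore $f \in \mathbb{C}$. For (b) $\Rightarrow$ (a): by Rosenlicht's theorem (as used in the proof of Lemma~\ref{lem.prel0}(b)), $\trdeg_{\mathbb{C}}\mathbb{C}(V)^G = \dim V - \dim(G\cdot x)$ for $x$ in general position, so $\mathbb{C}(V)^G = \mathbb{C}$ forces $\dim(G\cdot x) = \dim V$, i.e. $G\cdot x$ is dense.

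None of these steps presents a real difficulty; the point requiring the most care is the verification in (a) $\Rightarrow$ (b) that an invariant rational function is constant on a dense orbit, where one must not assume that the base point $x$ lies in the domain of definition of $f$. This is exactly why the argument is routed through the orbit map and the fact that a rational function on an algebraic group invariant under left translation is constant, rather than by naively restricting $f$ to $G\cdot x$.
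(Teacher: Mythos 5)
Your proof is correct. The paper itself takes the shorter cyclic route: it declares (a) $\Rightarrow$ (b) $\Rightarrow$ (c) $\Rightarrow$ (d) to be obvious and then closes the loop with a single nontrivial implication (d) $\Rightarrow$ (a), obtained from Lemma~\ref{lem.prel0}(b): emptiness of the quotient forces $\dim(V)=\dim(G)-\dim(S)=\dim(G\cdot x)$ for $x$ in general position, so the generic orbit is dense. Your chain of pairwise equivalences proves strictly more implications than the cycle requires, and as a result you have to supply two reverse steps the paper gets for free: (c) $\Rightarrow$ (b), for which you correctly invoke Lemma~\ref{lem.prel0}(a) (the UFD/no-characters argument, which is indeed where semisimplicity enters), and (d) $\Rightarrow$ (c), via the observation that $\Proj$ of a graded domain with degree-zero part $\mathbb{C}$ is empty iff the ring is $\mathbb{C}$. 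The essential engine is the same in both arguments --- Rosenlicht's formula $\trdeg_{\mathbb C}\mathbb C(V)^G=\dim V-\dim(G\cdot x)$, which you apply directly for (b) $\Rightarrow$ (a) and which the paper applies through Lemma~\ref{lem.prel0}(b) for (d) $\Rightarrow$ (a). Your version is a bit longer but more self-contained and makes explicit (via the orbit-map pullback in (a) $\Rightarrow$ (b)) a point the paper waves off as obvious; the paper's version is more economical because the cyclic structure lets each individual implication be the easy one.
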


\begin{proof} The implications
(a) $\Longrightarrow$ (b) $\Longrightarrow$ (c) $\Longrightarrow$ (d)
are obvious.

(d) $\Longrightarrow$ (a). Assume that the
GIT quotient $\mathbb P(V)\GITquot G$ empty, i.e., its dimension is $-1$.
Then by Lemma~\ref{lem.prel0}(b), $\dim(V) = \dim(G) - \dim(S)$.
On the other hand, for $x \in V$ in general position,
$\dim(G \cdot x) = \dim(V)$. Thus shows that
$G \cdot x$ is dense in $V$, as desired.
\end{proof}

\begin{corollary} \label{cor2}
{\rm (}\cite[\S 3, Proposition 1]{sato-kimura}{\rm )}
Let $G \to \GL(V)$ be a finite-dimensional representation
of a semisimple linear algebraic group $G$.  If $\dim(V) > \dim(G)$, then
$\mathbb P(V)\GITquot G \neq \emptyset$.
\end{corollary}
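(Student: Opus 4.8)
The plan is to deduce this from Corollary~\ref{cor1} by contradiction. Suppose, to the contrary, that $\mathbb P(V)\GITquot G = \emptyset$. Then the implication (d) $\Longrightarrow$ (a) of Corollary~\ref{cor1} tells us that $G$ has a dense orbit $G \cdot x$ in $V$. Since $G \cdot x$ is a locally closed dense subset of the irreducible variety $V$, it has the same dimension as $V$, i.e. $\dim(G \cdot x) = \dim(V)$. On the other hand, $G \cdot x$ is the image of $G$ under the orbit morphism $g \mapsto g \cdot x$, so $\dim(G \cdot x) \leqslant \dim(G)$. Combining these gives $\dim(V) \leqslant \dim(G)$, contradicting the hypothesis $\dim(V) > \dim(G)$. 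Hence $\mathbb P(V)\GITquot G \neq \emptyset$.

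Alternatively, and perhaps more transparently, I would argue directly from the dimension formula of Lemma~\ref{lem.prel0}(b). Since $\dim(V)$ and $\dim(G)$ are integers with $\dim(V) > \dim(G)$, we have $\dim(V) - \dim(G) \geqslant 1$; and since $\dim(S) \geqslant 0$ for the stabilizer $S$ in general position, Lemma~\ref{lem.prel0}(b) yields
\[
\dim \bigl( \mathbb P(V)\GITquot G \bigr) = \dim(V) - \dim(G) + \dim(S) - 1 \geqslant \dim(S) \geqslant 0 .
\]
By the remark following Lemma~\ref{lem.prel0}, a GIT quotient of dimension $\geqslant 0$ is non-empty, which gives the claim.

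I do not anticipate any real obstacle here: the statement is essentially a bookkeeping consequence of the machinery already set up. The only two mild points worth being careful about are that a dense orbit in an irreducible variety is automatically of full dimension (so that the orbit-dimension bound $\dim(G\cdot x) \leqslant \dim(G)$ really bites), and that the hypothesis is a strict inequality of integers, so it upgrades to $\dim(V) \geqslant \dim(G) + 1$, which is exactly what is needed to absorb the $-1$ in the dimension formula.
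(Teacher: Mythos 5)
Your second argument is exactly the paper's proof: apply Lemma~\ref{lem.prel0}(b) together with $\dim(V)-\dim(G)\geqslant 1$ and $\dim(S)\geqslant 0$ to get $\dim(\mathbb P(V)\GITquot G)\geqslant 0$. Both it and your alternative contradiction argument via Corollary~\ref{cor1} are correct, so the proposal is fine and takes essentially the same approach as the paper.
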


\begin{proof} If $\dim(V) > \dim(G)$, then by Lemma~\ref{lem.prel0}(b),
$\dim(\mathbb P(V)\GITquot G) \geqslant \dim(S) \geqslant 0$, i.e.,
$\mathbb P(V)\GITquot G \neq \emptyset$.
\end{proof}

\subsection*{The index of a representation}
Let $\rho \colon H \to \GL_d$ be a faithful finite-dimensional
representation of a simple complex linear algebraic
group $H$.  Let $\operatorname{Lie}(H)$
be the lie algebra of $H$ and
\[ \rho^* = d \rho_{| e} \colon \Lie(H) \to \Lie(GL_d) = \Mat_d \]
be the induced map of Lie algebras.
Following E. M.~Andreev, E.~B.~Vinberg and A.~G.~Elashvili~\cite{ave},
we define the index $l(\rho)$ by the formula
\[ l(\rho) = \frac{\Tr \! \big( \rho^*(h)^2 \big)}{\Tr \!
\big( \operatorname{ad}(h)^2 \big)} \, , \]
where $h \in \Lie(H)$. Note that the right hand side of the formula
is independent of the choice of $h$, as long as
$\Tr \! \big( \operatorname{ad}(h)^2 \big) \neq 0$.  It is clear from
this definition that the index is additive, i.e.,
\begin{equation} \label{e.additivity}
l(\rho_1 \oplus \rho_2) = l(\rho_1) + l(\rho_2) \, .
\end{equation}

\begin{example} \label{ex.index} Consider the natural
representation $\rho_{\rm nat}$ of $H = \SL_d$ on $V = \mathbb C^d$. Take
\[ \text{$h = \operatorname{diag}(\lambda_1, \dots, \lambda_d) \in
\mathfrak{sl}_d$, where $\lambda_1 + \dots + \lambda_d = 0$.} \]
Here $\mathfrak{sl}_{d}= \Lie(\SL_d)$.
Then $\rho_{\rm nat}^*(h) = h$, $\Tr(h^2) = \sum_{i = 1}^d \lambda_i^2$ and
\begin{gather}
\nonumber
\Tr \left(\operatorname{ad}(h)^2 \right) = \sum_{i \neq j} (\lambda_i - \lambda_j)^2 =
\sum_{i \neq j} (\lambda_i^2 + \lambda_j^2 - 2 \lambda_i \lambda_j) = \\
\nonumber
2(d-1) \sum_{i = 1}^d \lambda_i^2 - 2 \big( (\sum_{i = 1}^d \lambda_i)^2
 -  \sum_{i = 1}^d \lambda_i^2 \big) =
2d \sum_{i = 1}^d \lambda_i^2.
\end{gather}
We conclude that $l(\rho_{\rm nat}) = \dfrac{1}{2d}$.
\end{example}

\section{Proof of Theorem~\ref{thm.GIT}}\label{sec:proof}

\textbf{Proof of case (a):} We claim for every $v \in V$
the closure of the orbit
$\SL(V_n) \cdot v$ in $V$ contains $0$.
If we can prove this claim, then clearly
$\mathbb C[V]^G = \mathbb C$, and thus $\mathbb P(V)\GITquot G$ is empty; see
Corollary~\ref{cor1}.

To prove the claim,
let $U = V_1 \otimes \dots \otimes V_{n-1}$, let $W=V_{n}$, and
write $\sum_{i = 1}^m u_i \otimes w_i$, where $u_1, \dots, u_m$ form
a basis of $U$ and $w_{i}\in W$.
By our assumption, \[ m = d_1 \cdots d_{n-1} < d_n = \dim(V_n) \, . \]
Hence, we can choose a basis $f_1, \dots, f_{d_n}$ of $V_n$ such that
$w_1, \dots, w_m \in \Span(f_1, \dots, f_{d_n-1})$. Now define
the $1$-parameter subgroup
$\lambda \colon \mathbb G_m \to \SL(V_n)$ by $\lambda(t) f_j = t f_j$
for $j = 1, \dots, d_n - 1$ and $\lambda(t) f_n = t^{-d_n + 1} f_n$.
Then $\lambda(t) \cdot v = t v \to 0$, as $t \to 0$ and hence,
$0$ lies in the closure of $\SL(V_n) \cdot v$ in $V$, as claimed.

\smallskip
\textbf{Proof of case (b):} Let $U = V_1 \otimes \dots \otimes V_{n-1}$ and let $W=V_{n}$. By our assumption
$\dim(U) = \dim(W)$. Note that
\begin{equation} \label{e.main(2)}
\SL(W) \subset G \subset \SL(U) \times \SL(W).
\end{equation}
Identify $U$ with $W^*$ and thus $V = U \otimes W$ with the space
of $n \times n$-matrices $\Mat_n$, where $\SL(U) \simeq \SL_n$ acts
by multiplication on the left and $\SL(W) \simeq \SL_n$
acts by multiplication on the right. Let $f \colon U \otimes W =
\Mat_n \to \mathbb C$ be the determinant map. Then $f$ is invariant under
$\SL(U) \times \SL(W)$ and hence, under $G$; see~\eqref{e.main(2)}.
Thus shows that $\mathbb C[f] \subset \mathbb C[V]^G$. On the other hand,
\[ \mathbb C[V]^{G}  \subset   \mathbb C[V]^{\SL(W)} =
\mathbb C[\Mat_n]^{\SL_n} = \mathbb C[f] \, . \]
Thus $\mathbb C[V]^{G} = \mathbb C[f]$ is a polynomial ring in one variable.
Consequently, \[ \mathbb P(V)\GITquot G = \Proj \mathbb C[f] = \mathbb P^0 \]
is a single point, as claimed.

\smallskip

\textbf{Proof of case (c):} Let $W=V_{1}\otimes \dotsb \otimes
V_{d_{n-1}}$, let $H=SL_{d_{1}}\times \dotsb SL_{d_{n-1}}$, let $V' =
V_{d_1} \otimes \dots \otimes V_{d_{n-1}} \otimes V_{P - d_n}$, and
let $G' = \SL_{d_1} \times \dots \times \SL_{d_{n-1}} \times
\SL_{P-d_n}$, where $P = d_1 \dots d_{n-1}$.
By \cite[Lemma 2(a)]{littelmann}, $V \GITquot G$ is
isomorphic to $V' \GITquot G'$. Thus
\[ \dim(\PP(V) \GITquot G) = \dim(V \GITquot G) - 1
 = \dim(V' \GITquot G') - 1 = \dim(\PP(V') \GITquot G') \, , \]
as claimed.

\smallskip
\textbf{Proof of case (d):} Our argument will rely on the description of stabilizers
in general position in irreducible representations $\rho \colon G \to \GL(V)$
of a semisimple group $G$, satisfying the condition that the index
\begin{equation} \label{e.elashvili}
\text{$l(\rho_{| \, H}) \geqslant 1$ for every simple normal
subgroup $H$ of $G$,}
\end{equation}
due to Elashvili~\cite{elashvili}. In order to apply this description
to our representation
of $G = \SL(V_1) \times \dots \times \SL(V_n)$ of
$G$ on $V = V_1 \otimes \dots \otimes V_n$ (which we will denote by $\rho$),
we need to check that condition~\eqref{e.elashvili} is satisfied
for this representation.

The simple normal subgroups of $G$ are $H = \SL(V_i)$ for $i = 1, \dots, n$.
Clearly the restriction $\rho_{| \, \SL(V_1)}$ is isomorphic to the direct
sum of $\dim(V_2 \otimes \dots \otimes V_n) = d_2 \dots d_n$ copies
of the natural representation $\rho_{\rm nat}$ of $\SL(V_1)$.
As we saw in Example~\ref{ex.index}, $l(\rho_{\rm nat}) = \dfrac{1}{2d_1}$.
Hence, by~\eqref{e.additivity},
$l \big( \rho_{| \, \SL(V_1)} \big) = \dfrac{d_2 \dots d_n}{2d_1}$.
Similarly
$l \big( \rho_{| \, \SL(V_i)} \big) =
\dfrac{d_1 \cdots d_{i-1} d_{i+1} \cdots d_n}{2d_i}$ for
any $i = 1, 2, \dots, n$.
Since $d_1 \leqslant \cdots \leqslant d_n$, the smallest of these indices is
$l(\rho_n) = \dfrac{d_1 \dots d_{n-1}}{2d_n}$. The assumption of part (d),
that $d_n \leqslant \dfrac{1}{2} d_1 \cdots d_{n-1}$ is thus equivalent
to~\eqref{e.elashvili}.

Under this assumption~\cite[Theorem 9]{elashvili} asserts that
the connected component $S^0$ of the stabilizer $S$ in general position
for the action of $G$ on $V$ is as follows:
\[ S^0 \simeq \begin{cases}
\text{$(\mathbb G_m)^{d-1}$,
if $n = 3$, $(d_1, d_2, d_3) = (2, d, d)$ and $d \geqslant 3$,} \\
\text{$(\mathbb G_m)^{2}$,
if $n = 3$ and $(d_1, d_2, d_3) = (2, 2, 2)$,} \\
\text{$\{ 1 \}$,  otherwise.}
\end{cases} \]
In all cases, $S^0$ is reductive, and hence, so is $S$. By
Lemma~\ref{lem.prel0}(c), we conclude that $\mathbb P(V)\GITquot G$
is non-empty. Moreover, using the formula
$\dim(\mathbb P(V)\GITquot G) =  \dim(V) - \dim(G) + \dim(S) - 1$
of Lemma~\ref{lem.prel0}(c) and remembering that
$\dim(S) = \dim(S^0)$, we readily check that
\[ \dim(\mathbb P(V)\GITquot G) =
\begin{cases}
\text{$d - 3$, if $n = 3$, $(d_1, d_2, d_3) = (2, d, d)$, $d \geqslant 3$,} \\
\text{0, if $n = 3$ and $(d_1, d_2, d_3) = (2, 2, 2)$, and} \\
\text{$\dim(V) - \dim(G) - 1$ in all other cases.}
\end{cases} \]

\qed

\begin{remark} \label{rem.gkz}
Suppose $1 \leqslant d_1 \leqslant \dots \leqslant d_n$ and
\begin{equation} \label{e.c}
d_n \leqslant d_1 \cdots d_{n-1}/2.
\end{equation}
Theorem~\ref{thm.GIT}(c) tells us that then there exists
a non-constant $G=\SL(V_1) \times \dots \times \SL(V_n)$-invariant
polynomial on $V =V_1 \otimes \dots \otimes V_n$;
see Corollary~\ref{cor1}. Here, $\dim(V_i) = d_i$, as before.
Equivalently, there exists a homogeneous $G$-invariant polynomial of
degree $\geqslant 1$.
It is natural to try to exhibit such a polynomial explicitly.

It is well known that the hyperdeterminant
$h_{d_1, \dots, d_n} \colon V \to \mathbb C$
of format $d_1 \times \dots \times d_n$ is a homogeneous $G$-invariant
polynomial. However, $h_{d_1, \ldots, d_n} \neq 0$ if and only
if \[ d_n \leqslant d_1 + \dots + d_{n-1} - (n-2); \]
see~\cite[p. 446, Theorems 1.3 and Proposition 1.4]{gkz}. This inequality
is considerably weaker than~\eqref{e.c}. In other words, for many dimension
vectors $\dvec$ satisfying~\eqref{e.c}, the hyperdeterminant is identically
zero.

We also note that for any dimension vector $\dvec $ and an integer
$k \geqslant 1$, a procedure due to G.~Gour and N.~R.~Wallach~\cite{gw13}
produces
a basis for the vector space $(\CC[V]^G)_k$ of homogeneous $G$-invariant
polynomials of degree $k \geqslant 1$. However, when $\CC[V]^G \neq (0)$,
it is not a priori clear for which $k$, $(\CC[V]^G)_k \neq (0)$
\footnote{Gour and Wallach show that
$(\CC[V]^G)_k = (0)$ unless $k$ is divisible by the least
common multiple $l= \lcm(d_1, \dots, d_n)$. Thus we only need to consider
$k$ of the form $lq$, with $q = 1, 2, 3, \ldots$.}.
\end{remark}

\section{Proof of the first part of
Theorem~\ref{thm:main.theorem,numerical.formulation}}
\label{sec:analysis}

For each dimension vector $\dvec = (d_1, \dots, d_n)$,
with $d_1 \leqslant \dots \leqslant d_n$,
the recursive algorithm provided by
Theorem~\ref{thm.GIT} brings us to some terminal dimension vector
$\evec = (e_1, \dots, e_n)$, where
$e_1 \leqslant \ldots \leqslant e_n$.
The GIT quotient $\mathbb P(V)\GITquot G$ is empty
if the algorithm terminates on case (a), and non-empty if
the algorithm terminates on case (b) or (d).

\begin{remark} \label{rem.ones}
Recall from the Introduction (see footnote~\ref{footnote:
allow di=1 but need two non-trivial di}, pg.~\ref{page one}) that our standing assumption
is that $n \geqslant 2$ and $d_{n-1} \geqslant 2$. We now observe
that the terminal vector $\evec$ also satisfies these conditions.
Obviously, $n$ does not change; our claim is that
there are at most $n-2$ ones among the integers $e_1, \dots, e_n$.  Indeed,
in each recursion step, the number of 1s in the list of dimensions can
increase by one, but with $n-2$ 1s, we will either be in case (a) or
(b), and the recursion terminates.
\end{remark}

Let us now define a new
dimension vector $\avec = (a_1, \ldots, a_m)$ by removing all 1s from
$(e_1, \dots , e_n)$.

\begin{lemma}\label{lem.recursion}
Suppose that a terminal vector $\evec  = (e_1, \dots, e_n)$
satisfying the conditions for case (a), (b), or (d),
is obtained from $\dvec  = (d_1, \dots, d_n)$
by repeatedly performing the transformation
of Theorem~\ref{thm.GIT} (c).  Assume further that
$\avec = (a_1, \dots, a_m)$ is obtained from $\evec$ by removing all 1s,
as above.  Then

\smallskip
(a) $\Delta(\dvec ) = \Delta(\evec) = \Delta(\avec)$,

\smallskip
(b) $\gmax(\dvec ) = \gmax(\evec) = \gmax(\avec)$, and

\smallskip
(c) $R(\dvec ) = R(\evec) = R(\avec)$.
\end{lemma}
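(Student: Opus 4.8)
The plan is to verify that each of the three functions $\Delta$, $\gmax$, and $R$ is unchanged both by a single castling step (the transformation of Theorem~\ref{thm.GIT}(c)) and by appending a coordinate equal to $1$. Since all three functions are symmetric in the coordinates of the dimension vector, deleting a $1$ is equivalent to appending one, the sorting in Theorem~\ref{thm.GIT}(c) is immaterial, and the lemma then follows by induction on the number of recursion steps, using Remark~\ref{rem.ones} to ensure that $\avec$ has $m\geqslant 2$ coordinates so that $\gmax(\avec)$ is defined.

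Appending $d_{n+1}=1$ to $(d_1,\dots,d_n)$ leaves $\prod_i d_i$ unchanged and adds $d_{n+1}^2-1=0$ to $\sum_i(d_i^2-1)$, so $\Delta$ is unchanged; every pairwise gcd created is $1\leqslant\gmax$, so $\gmax$ is unchanged. For $R$, splitting the $k$-element subsets of $\{1,\dots,n+1\}$ according to whether they contain $n+1$ yields $G_k(d_1,\dots,d_n,1)=G_k(\dvec)+\binom{n}{k-1}$ (where $G_{n+1}(\dvec):=0$), so the net change in $R$ is $\sum_{k=1}^{n+1}(-1)^k\binom{n}{k-1}=-(1-1)^n=0$.

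For a castling step, set $P=d_1\cdots d_{n-1}$ and replace $d_n$ by $d_n'=P-d_n$, with $P/2<d_n<P$; the number of coordinates is unchanged. Then $\prod_i d_i'-\prod_i d_i=P(P-2d_n)$ and $\sum_i(d_i')^2-\sum_i d_i^2=(P-d_n)^2-d_n^2=P^2-2Pd_n$, so $\Delta$ is unchanged. The key arithmetic fact is that for any nonempty $T\subseteq\{1,\dots,n-1\}$ the integer $g(T):=\gcd(d_i:i\in T)$ divides $P$, so that $d_n'\equiv -d_n\pmod{g(T)}$ and hence $\gcd(d_n',g(T))=\gcd(d_n,g(T))$; in particular every pairwise gcd of the dimension vector is preserved, giving invariance of $\gmax$. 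Writing each $k$-subset of $\{1,\dots,n\}$ that contains $n$ as $T\cup\{n\}$ with $T\subseteq\{1,\dots,n-1\}$, this fact (together with the convention $\gcd(d_n,0)=d_n$ for $T=\emptyset$) shows that the only summand of any $G_k$ that changes is the one coming from the singleton $\{n\}$; thus $G_1$ changes by $(d_n')^2-d_n^2=P^2-2Pd_n$ while $G_k$ is unchanged for $k\geqslant 2$, so $R$ changes by $P(P-2d_n)-(P^2-2Pd_n)=0$.

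All of these are short finite computations; the only place that needs care is the bookkeeping for $R$ under a castling step — isolating the singleton $\{n\}$ as the unique source of change and handling the empty-subset convention correctly — which is a matter of organization rather than a genuine obstacle.
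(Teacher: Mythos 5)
Your proposal is correct and follows essentially the same route as the paper's proof: reduce to invariance under a single castling step and under inserting/deleting a coordinate equal to $1$, verify $\Delta$ by direct computation, verify the gcds via the divisibility $\gcd(d_i : i \in T) \mid P$ for $T \subseteq \{1,\dots,n-1\}$, and handle $R$ by noting that only the $G_1$-term changes under castling (cancelling against the change in $\prod_i d_i$) and by the binomial telescoping identity when removing a $1$. The only cosmetic difference is that the paper packages the $R$-computation as $R = \Delta - n + 1 + \sum_{k\geqslant 2}(-1)^k G_k$ rather than tracking the $G_1$ and product changes separately.
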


Note that $m \geqslant 2$ by
Remark~\ref{rem.ones}. In particular, $\gmax(\avec)$ is well defined.

\begin{proof}
Let $\dvec'  = (d_1', \dots, d_n')$ be the set of dimensions obtained from $\dvec$ by a single application of the castling transformation of
Theorem~\ref{thm.GIT}(c).
Since $\Delta(d_1, \dots, d_n)$, $R(d_1, \dots, d_n)$ and $\gmax(d_1, \dots, d_n)$ are all symmetric in $d_1, \dots, d_n$, we may reorder $d_1', \dots, d_n'$ and thus assume that $d_i' = d_i$
for $i \leqslant n -1$ and $d_n' = P - d_n$, where $P = d_1 \dots d_{n-1}$. The lemma follows by showing that each of the three quantities is invariant under $\dvec \to \dvec'$ and under eliminating a 1.

(a) An easy calculation shows that $\Delta(\dvec ') =
P(P - d_n) - (P - d_n)^2 - \sum_{i = 1}^{n-1} (d_i^2 - 1)$
is equal to
\[ \Delta(\dvec ) = Pd_n - d_n^2 - \sum_{i = 1}^{n-1} (d_i^2 - 1) .
\]
The identity $\Delta(1,d_2,\dots,d_n) = \Delta(d_2,\dots,d_n)$
is immediate from the definition.

(b) It is again obvious from the definition
that $\gmax(1,e_2,\dots,e_n) = \gmax(e_2,\dots,e_n)$.
Since $m \geqslant 2$, this implies that $\gmax(\dvec ) = \gmax(\evec )$.
To show invariance under the transformation in
Theorem~\ref{thm.GIT}(c), we will show more generally
that $\gcd(d_{i_1}', \dots , d_{i_k}') =
\gcd(d_{i_1}, \dots , d_{i_k})$ for any $2 \leqslant k \leqslant n$
and $1 \leqslant i_1 < \cdots < i_k \leqslant n$. If $i_n \leqslant n-1$,
this is obvious, since $d_{i_k}' = d_{i_k}$ for each $k$. If $i_k = n$, then
\[ \gcd(d_{i_1}', \dots , d_{i_{k-1}}', d_n') = \gcd(d_{i_1}, \dots , d_{i_{k-1}}, - d_n + P) = \gcd(d_{i_1}, \dots , d_{i_{k-1}}, d_n) \, , \] since $P$ is divisible by $d_{i_k}$ for $1 \leqslant k \leqslant n-1$.

(c) We have that $R(d_1,\dots, d_n) = \Delta(d_1, \dots d_n) - n + 1 + \sum_{k=2}^n (-1)^k G_k(d_1,\dots,d_n)$. Since $\Delta$ and the set of GCDs for all $k$-tuples with $k \geqslant 2$ are invariant under $\dvec \to \dvec'$, $R$ also invariant. If $d_1 = 1$, then all GCDs involving $d_1$ are equal to 1, so we have that $\sum_{k=1}^n (-1)^k G_k(d_1,\dots,d_n)$ is equal to $\sum_{k=1}^n (-1)^k G_k(d_2,\dots,d_n) + \sum_{k=1}^n (-1)^k \binom{n-1}{k-1} = \sum_{k=1}^n (-1)^k G_k(d_2,\dots,d_n)$. Since also $\prod_i d_i = \prod_{i \ne 1} d_i$, $R$ is unchanged if we remove dimensions equal to 1.
\end{proof}

We now show that the value of $R= R(\dvec )$ predicts on which case
the algorithm in Theorem~\ref{thm.GIT} will terminate, and thus
whether or not the quotient is empty.

\begin{proposition}\label{prop:Rcases}
For the dimension vector $\dvec = (d_1, \dots, d_n)$,
the algorithm in Theorem~\ref{thm.GIT}
terminates on case (a) if and only
if $R<0$, on case (b) if and only if $R=0$ and on case (d) if
and only if $R>0$.
\end{proposition}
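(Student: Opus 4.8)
The plan is to reduce everything to the terminal vector produced by the algorithm and then read off the sign of $R$ on each of the three kinds of terminal vector. Running the algorithm of Theorem~\ref{thm.GIT} on $\dvec$ produces a terminal vector $\evec$ which satisfies the hypotheses of exactly one of cases (a), (b), (d), and by Lemma~\ref{lem.recursion}(c) we have $R(\dvec)=R(\evec)=R(\avec)$, where $\avec=(a_1,\dots,a_m)$ is $\evec$ with all $1$'s removed (so $2\leqslant a_1\leqslant\dots\leqslant a_m$ and, by Remark~\ref{rem.ones}, $m\geqslant 2$). Since ``$\evec$ falls under (a)/(b)/(d)'' and ``$R<0$\,/\,$R=0$\,/\,$R>0$'' are both partitions of the set of possibilities, it suffices to prove the three implications (a)$\,\Rightarrow R(\avec)<0$, (b)$\,\Rightarrow R(\avec)=0$, (d)$\,\Rightarrow R(\avec)>0$; the biconditionals then follow formally.

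The computational engine is the second Jordan totient $J_2(d)=d^2\prod_{p\mid d}(1-p^{-2})$, which satisfies $\sum_{d\mid n}J_2(d)=n^2$ and $J_2(d)\geqslant 3$ for every integer $d\geqslant 2$. Writing each $\gcd(a_{i_1},\dots,a_{i_k})^2=\sum_{d\mid a_{i_1},\dots,\,d\mid a_{i_k}}J_2(d)$ and interchanging the order of summation recasts the definition of $R$ as
\[
R(\avec)=\prod_i a_i-\sum_{d\in D}J_2(d),\qquad D:=\{\,d\geqslant 1:\ d\mid a_i\text{ for some }i\,\},
\]
and, after pulling out the term $d=1$, as $R(\avec)=\Delta(\avec)+\sum_{d\geqslant 2,\ N(d)\geqslant 2}(N(d)-1)\,J_2(d)$, where $N(d):=\#\{i:d\mid a_i\}$. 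In particular $R(\avec)\geqslant\Delta(\avec)$, with equality exactly when the $a_i$ are pairwise coprime. Cases (a) and (b) are then one line each: in case (b) the number $a_m=a_1\cdots a_{m-1}$ is a common multiple of all the $a_i$, so $D=\{d:d\mid a_m\}$ and $\sum_{d\in D}J_2(d)=a_m^2=\prod_i a_i$, whence $R(\avec)=0$; in case (a), $D$ still contains every divisor of $a_m$, so $\sum_{d\in D}J_2(d)\geqslant a_m^2>a_m\cdot(a_1\cdots a_{m-1})=\prod_i a_i$, whence $R(\avec)<0$.

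The substance is case (d), where $m\geqslant 3$. As shown in the proof of Theorem~\ref{thm.GIT}(d) the stabilizer in general position is reductive, so the quotient is non-empty and $\dim\PP(V)\GITquot G=D(\dvec)\geqslant 0$; comparing this with the explicit value of $D$ in case (d) and using Lemma~\ref{lem.recursion}(a) gives $\Delta(\avec)\geqslant 0$ unless $\avec=(2,d,d)$ with $d\geqslant 2$ (the two exceptional families $(1,\dots,1,2,2,2)$ and $(1,\dots,1,2,d,d)$), in which case $\Delta(\avec)=-2$ but a direct computation gives $R(\avec)=d^2+\gcd(2,d)^2-4\geqslant d^2-3>0$. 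If the $a_i$ are \emph{not} pairwise coprime, the correction sum above is non-empty, hence $\geqslant J_2(2)=3$, so $R(\avec)\geqslant\Delta(\avec)+3\geqslant 1$ (using $\Delta(\avec)\geqslant -2$, valid throughout case (d)). If the $a_i$ \emph{are} pairwise coprime, they are distinct and $\geqslant 2$, hence $\avec$ is not exceptional, $\Delta(\avec)\geqslant 0$, and $R(\avec)=\Delta(\avec)$; it then remains to show $\Delta(\avec)\neq 0$, i.e.\ that $\prod_i a_i=\sum_i a_i^2-m+2$ has no solution with $2\leqslant a_1<\dots<a_m$ pairwise coprime and $2a_m\leqslant a_1\cdots a_{m-1}$. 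For $m=3$ this follows from a short analysis of the equation viewed as a quadratic in $a_3$, using $a_2<a_3\leqslant\tfrac12 a_1a_2$; for $m\geqslant 4$ one checks, using $a_1\cdots a_{m-1}\geqslant 2a_m$ together with $a_1\cdots a_{m-1}\geqslant 2(a_1+\dots+a_{m-1})$ (true for any $m-1\geqslant 3$ distinct integers $\geqslant 2$), that $\prod_i a_i>\sum_i a_i^2$, so $\Delta(\avec)>0$.

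The main obstacle is case (d), and within it the pairwise-coprime subcase: there the correction term vanishes, $R$ coincides with the expected dimension $\Delta$, and one is thrown back on the small Diophantine fact above, while also having to isolate the exceptional family $(2,d,d)$, for which $\Delta=-2<0$ even though $R>0$. Everything else falls out cleanly once the Jordan-totient identity is in place.
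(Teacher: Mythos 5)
Your argument is correct, but it takes a genuinely different route from the paper's. The paper's proof works directly with the inclusion--exclusion sum defining $R$, splitting $G_k=A_k+B_k$ according to whether the $k$-tuple contains $a_m$, and then grinds through cases (a), (b), (d) with a rather intricate chain of inequalities (including sub-cases $\alpha=1$ versus $\alpha\geqslant 2$ in case (a), and parity of $a_m$ in case (d)). Your Jordan-totient identity
\[
R(\avec)=\prod_i a_i-\sum_{d\in D}J_2(d)=\Delta(\avec)+\sum_{\substack{d\geqslant 2\\ N(d)\geqslant 2}}(N(d)-1)\,J_2(d)
\]
is a real improvement in transparency: it makes cases (a) and (b) one-line computations, it explains conceptually why $R\geqslant\Delta$ always holds (with equality exactly for pairwise coprime entries), and it reduces case (d) to the bound $\Delta\geqslant -2$ plus the small correction $J_2(d)\geqslant 3$. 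I checked the identity and the three case arguments; they are sound. The trade-offs: (1) your case (d) imports the geometric input that the quotient is non-empty with the dimension formula of Theorem~\ref{thm.GIT}(d) to deduce $\Delta(\avec)\geqslant -2$ (and $\geqslant 0$ off the exceptional family), whereas the paper's proof of this proposition is purely arithmetic; this is legitimate since Theorem~\ref{thm.GIT} is already proved, but note the paper establishes the same arithmetic facts directly in Proposition~\ref{prop.terminal}(iv), which you could cite instead to keep the argument self-contained. (2) In the pairwise-coprime subcase your Diophantine equation should read $\prod_i a_i=\sum_i a_i^2-m+1$, not $-m+2$ (harmless, since your argument shows the stronger $\prod_i a_i>\sum_i a_i^2$), and the $m=3$ analysis is only sketched --- the key point to make explicit is that $a_2<a_3\leqslant\tfrac12 a_1a_2$ forces $a_1\geqslant 3$, after which $\Delta\geqslant\phi(a_2)=a_2^2(a_1-2)-a_1^2+2>0$.
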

\begin{proof}
Since the algorithm always terminates on case
(a), (b), or (d), we need only show that $R(\dvec)$ is respectively
negative, zero, and positive in these three cases.
Let $\evec = (e_1, \dots, e_n)$ be the terminal vector,
with $e_1 \leqslant \ldots \leqslant e_n$, and
$\avec = (a_1,\dots, a_{m-1},a_m)$ be
obtained from $\evec$ by removing all $1$s, as
above. By Lemma~\ref{lem.recursion},
it suffices to show that $R = R(\avec)$ is negative, positive, and zero
in cases (a), (b) and (d), i.e., if
$e_n > e_1 \cdots e_{n-1}$, $e_n = e_1 \cdots e_{n-1}$ and
$e_n \leqslant {1 \over 2} e_1 \cdots e_{n-1}$, or equivalently, if
$a_m > a_1 \cdots a_{m-1}$, $a_m = a_1 \cdots a_{m-1}$ and
$a_m \leqslant {1 \over 2} a_1 \cdots a_{m-1}$, respectively.
For the proof below, it will
be useful to define $B_k$ to be the sum of all the terms in $G_k$
with $k$-tuples involving $a_m$ and $A_k = G_k(a_1,\dots, a_{m-1})$ to
be the sum of the remaining terms. Then $G_k = A_k + B_k$, and
\be
\label{RABeq}
R = a_1 \cdots a_{m} - a_m^2 + \sum_{k=1}^{m-1} (-1)^k (A_k - B_{k+1}) \; .
\ee
We consider the three cases in turn.

\smallskip
\noindent
{\bf Case (b):} $a_m = a_1\dotsb  a_{m-1}$.

Here, each term in $B_{k+1}$ is equal to the corresponding term in
$A_k$ obtained by omitting $a_m$ from the GCD, so we have $B_{k+1} =
A_k$ for $k \geqslant 1$. Since $a_1 \cdots a_{m} - a_m^2 = 0$ in this
case, equation (\ref{RABeq}) gives $R(d_1,\dots, d_n) = 0$.

\smallskip
\noindent
{\bf Case (a):} $a_m > a_1\dotsb  a_{m-1}$.

In this case, we can write $a_m =a_1 \cdots a_{m-1} + \alpha$ for some $\alpha > 0$. From (\ref{RABeq}) we have
\be
\label{RAB}
R(a_1,\dots, a_{m-1},a_m) =-\alpha^2 -\alpha a_1 \cdots a_{m-1} - \sum_{k \geqslant 1 \; odd} (A_k - B_{k+1}) + \sum_{k \geqslant 2 \; even} (A_k - B_{k+1})
\ee
For each term in $A_k$, there is a corresponding term in $B_{k+1}$ obtained by including $a_m$ in the $gcd$. Since $gcd(a_{i_1}, \dots, a_{i_k},a_m) \leqslant gcd(a_{i_1}, \dots, a_{i_k})$, we have $A_k \geqslant B_{k+1}$.
Making use of this for odd $k$ and assuming for now that $\alpha \geqslant 2$, we obtain from (\ref{RAB})
\bea
R(a_1,\dots, a_{m-1},a_m) &<& -2 a_1 \cdots a_{m-1} + \sum_{k \geqslant 2\; even} A_k \cr
& \leqslant & -2 a_1 \cdots a_{m-1} + \sum_{k \geqslant 2\; even} \binom{m-1}{k} a_{m-2} a_{m-1} \label{ineq}\\
& = & -2 a_1 \cdots a_{m-1} + (2^{m-2}-1) a_{m-2} a_{m-1} \cr
& < & (2^{m-2} -1 - 2 a_1 \cdots a_{m-3} ) a_{m-2} a_{m-1} \nonumber
\eea
where to obtain the third line, we use that $A_k$ has $\binom{m-1}{k}$ terms and that $\gcd(a_{i_1} \cdots a_{i_k})^2 \leqslant a_{m-k}^2 \leqslant a_{m-2} a_{m-1}$ for $k \geqslant 2$ since the GCD of $k$ integers chosen from $(a_1,\dots, a_{m-1})$ cannot exceed $a_{m-k}$, the $k$th largest number in this set. Since $a_i \geqslant 2$, the term in brackets in the final expression is negative, and we conclude that $R<0$.

Next consider the case where $\alpha = 1$. Here, all GCDs involving $a_m$ are equal to 1, so we have $B_k = \binom{m-1}{k-1}$ and the terms in (\ref{RAB}) involving $B$ are
\be
\sum_{k \geqslant 1} (-1)^{k-1} B_{k+1} = \sum_{k \geqslant 1} (-1)^{k-1} \binom{m-1}{k} = 1
\ee
Then from (\ref{RAB}), we get
\be
\label{RAB2}
R(a_1,\dots, a_{m-1},a_m) =- a_1 \cdots a_{m-1} - \sum_{k \geqslant 1 \; odd} A_k  + \sum_{k \geqslant 2 \; even} A_k
\ee
As above, we can now decompose $A_k = C_k + D_k$, where $D_k$ represents all the terms in $A_k$ with $k$-tuples involving $a_{m-1}$ and $C_k$ are the remaining terms. The same argument as before shows that $C_k \geqslant D_{k+1}$.
Starting from (\ref{RAB2}) and eliminating negative terms $-D_{2l+1}$ and $-(C_{2l-1} - D_{2l})$ we then have
\bea
R(a_1,\dots, a_{m-1},a_m) &<& - a_1 \cdots a_{m-1} + \sum_{k \geqslant 2 \; even} C_k \cr
& \leqslant & (2^{m-3} -1 - a_1 \cdots a_{m-3} ) a_{m-2} a_{m-1} \nonumber
\eea
where the calculation is the same as in (\ref{ineq}) but we end up with $2^{m-3}$ instead of $2^{m-2}$ since $C_k$ involve only $m-2$ $a_k$s. Again, the term in brackets in the final expression is negative, and we conclude that $R<0$.

\smallskip
\noindent
{\bf Case (d):} $a_m \leqslant {1 \over 2} a_1,\dots, a_{m-1}$.

Note that this is only possible if $m \geqslant 3$.
Starting from equation (\ref{RABeq}), we have
\bea
\label{d_ineq}
R &=& {1 \over 4} a_1^2 \cdots a_{m-1}^2 - ({1 \over 2} a_1 \cdots a_{m-1} - a_m)^2 + \sum_{k \geqslant 1}(-1)^k (A_k - B_{k+1})
\cr
&\ge& a_{m-1}^2 (a_1 \cdots a_{m-2} - 1) + \sum_{k \geqslant 1}(-1)^k (A_k - B_{k+1})
\cr
&>& a_{m-1}^2 (a_1 \cdots a_{m-2} - 1) - \sum_{k \geqslant 1 \; odd} A_k
\eea
where in the second line, we have used that the maximum value of $({1 \over 2} a_1 \cdots a_{m-1} - a_m)^2$ will be for $a_m = a_{m-1}$, and in the third line, we have discarded non-negative terms $(A_k - B_{k+1})$ for $k$ even and positive terms $B_{k+1}$ for $k$ odd. Since each of the $\binom{m-1}{k}$ GCDs contributing to $A_k$ is less than or equal to $a_{k-1}$, we have that
\begin{eqnarray*}
R &>& a_{m-1}^2 (a_1 \cdots a_{m-2} - 1) - \sum_{k \geqslant 1 \; odd} a_{m-1}^2 \binom{m-1}{k} \cr
&=& a_{m-1}^2 (a_1 \cdots a_{m-2} - 1 - 2^{m-2})
\end{eqnarray*}
Since $a_1, \dots, a_{m-2} \geqslant 2$, we see that $R > 0$
unless $(a_1,\cdots, a_{m-2}) = (2, \dots, 2)$.

For this case, with $(a_1,\cdots, a_m) = (2,\dots,2,a_{m-1},a_m)$, we
can calculate the second line of (\ref{d_ineq}) directly. Consider two cases.

Case 1: $a_m$ is even. Here $A_1 = 4(m-2) + a_{m-1}^2$,
$B_2 = 4(m-2) + \gcd(a_{m-1}, a_m)^2$, and $A_k = B_{k+1}$
for any $k \geqslant 2$. Thus
\[ R \geqslant a_{m-1}^2 (2^{m-2} - 1) - (A_1 - B_2)
 = a_{m-1}^2 (2^{m-2} - 2) + \gcd(a_{m-1}, a_m)^2 > 0. \]

Case 2: $a_m$ is odd. Here
% $A_1 = 4(m-2) + a_{m-1}^2$,
\[ B_2 = (m-2) + \gcd(a_{m-1}, a_m)^2 = \gcd(a_{m-1}, a_m)^2
- \binom{m-1}{0} + \binom{m-1}{1} \]
and $B_{k+1} = \begin{pmatrix} m-1 \\ k \end{pmatrix}$ for any $k \geqslant 2$. Thus
\[ \sum_{k \geqslant 1} (-1)^k B_{k+1} =
- \gcd(a_{m-1}, a_m)^2 + \sum_{i \geqslant 0} (-1)^i \binom{m-1}{i} =
- \gcd(a_{m-1}, a_m)^2  \, .  \]
Moreover, $A_1 = 4(m-2) + a_{m-1}^2 = a_{m-1}^2 - 4 \begin{pmatrix} m-1 \\ 0
\end{pmatrix} +
4 \begin{pmatrix} m-1 \\ 1 \end{pmatrix}$ and
\[ A_k = 4 \binom{m-2}{k} + \gcd(2, a_{m-1})^2 \binom{m-2}{k-1}
= 4 \binom{m-1}{k} + (\gcd(2, a_{m-1})^2 - 4) \binom{m-2}{k-1}  \]
for any $k \geqslant 2$.  Thus
\begin{eqnarray*}
R & \geqslant & a_{m-1}^2 (2^{m-2} - 1) -
\sum_{k \geqslant 1} (-1)^k B_{k + 1} - A_1
+ \sum_{k \geqslant 2} (-1)^k A_k \cr
 & = & a_{m-1}^2 (2^{m-2} - 1)  + \gcd(a_{m-1}, a_m)^2 - a_{m-1}^2 +
4 \binom{m-1}{0} - 4 \binom{m-1}{1} +  \sum_{k \geqslant 2} (-1)^k A_k \cr
 & = & a_{m-1}^2 (2^{m-2} - 2) + \gcd(a_{m-1}, a_m)^2 + 4 \sum_{k \geqslant 0} (-1)^k \binom{m-1}{k} \cr
 &  & + (\gcd(2, a_{m-1})^2 - 4) \sum_{k \geqslant 2} (-1)^k \binom{m-2}{k-1}
\cr
 & = & a_{m-1}^2 (2^{m-2} - 2)  + \gcd(a_{m-1}, a_m)^2 +
(\gcd(2, a_{m-1})^2 - 4) \, .
\end{eqnarray*}
If $m \geqslant 4$, then the first term is $\geqslant 8$ and thus
$R > 0$. As we mentioned above,
the inequality $a_m \leqslant a_1 \cdots a_{m-1}/2$ forces $m$
to be $\geqslant 3$.  Thus we may assume that $m = 3$. Since
$a_1 = 2$, we have $a_3 \leqslant 2 \cdot a_2/2 = a_2$ and thus
$a_2 = a_3$. In this case $\gcd(a_{m-1}, a_m) =
\gcd(a_2, a_3) = a_3$. Substituting $m = 3$ into the above inequality,
and remembering that $a_1 = 2$ and $a_2 = a_3 \geqslant 2$ is odd,
we obtain
\[ R \geqslant a_2^2 (2^{3-2} - 2) + a_2^2 +
(\gcd(2, a_2)^2 - 4)  \geqslant 0 + 9 + (1- 4) > 0 \, , \]
as desired.
\end{proof}

\begin{proof}[Proof of
Theorem~\ref{thm:main.theorem,numerical.formulation}, first part]
Suppose $R(\dvec ) < 0$. By Proposition~\ref{prop:Rcases}
the algorithm of Theorem~\ref{thm.GIT} terminates on case (a) and
the quotient $\PP(V) \GITquot G$ is empty.

On the other hand, $R(\dvec ) \geqslant 0$, Proposition~\ref{prop:Rcases}
shows that the algorithm of Theorem~\ref{thm.GIT} terminates on case (b) or (d).
Theorem~\ref{thm.GIT} now tells us that the quotient $\PP(V) \GITquot G$
is no-empty. This proves the first assertion of
Theorem~\ref{thm:main.theorem,numerical.formulation}.
\end{proof}

\section{Conclusion of the proof of
Theorem~\ref{thm:main.theorem,numerical.formulation}}
\label{sect.proof-main}

We will prove the remaining statements in
Theorem~\ref{thm:main.theorem,numerical.formulation} using the
following proposition.

\begin{proposition} \label{prop.terminal} Suppose that we perform
the recursive procedure of Theorem~\ref{thm.GIT}, starting with
the dimension vector $\dvec  = (d_1, \dots, d_n)$. Here, as always,
$d_1 \leqslant \cdots \leqslant d_n$, $n \geqslant 2$ and $d_{n-1} \geqslant 2$.
Denote the terminal dimension vector by $\evec  = (e_1, \dots, e_n)$.

\begin{enumerate}
\item
If $\Delta(\dvec ) < -5$, then $e_1 \dots e_{n-1} \leqslant e_n$.
That is, the recursion in Theorem~\ref{thm.GIT} terminates
on case (a) or (b).  The quotient $\mathbb{P}(V) \GITquot  G$
is empty or a single point.

\item
If $-5 \leqslant \Delta(\dvec ) < -2$, then
$e_1 \dots e_{n-1} = e_n$. That is, the recursion
in Theorem~\ref{thm.GIT} terminates on case (b).
The quotient $\mathbb{P}(V) \GITquot  G$ is a single point.

\item
If $\Delta(\dvec ) = -2$, then the recursion in
Theorem~\ref{thm.GIT} terminates on case (d) with
$\evec  = (1,\dots,1, 2, a, a)$ for some $a \geqslant 2$.
Here $a = \gmax( \dvec ) \geqslant 3$.
The quotient $\mathbb{P}(V) \GITquot  G$ is a point if $a = 2$ and has
dimension $a - 3$ if $a \geqslant 3$.

\item
If $\Delta(\dvec ) > -2$, then the recursion in
Theorem~\ref{thm.GIT} terminates on case (d) with
$\evec  \neq (1,\dots, 1, 2, a, a)$ for any
$a \geqslant 2$.  In this case $\Delta(\dvec ) \geqslant 2$, and
the quotient $\mathbb{P}(V) \GITquot  G$ has dimension $\Delta(\dvec )$.
\end{enumerate}
\end{proposition}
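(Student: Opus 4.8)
The plan is to reduce the statement to the terminal dimension vector and then, for each sign regime of $\Delta(\dvec)$, to pin down exactly which of the terminal cases (a), (b), (d) of Theorem~\ref{thm.GIT} can occur. The function $D$ of Theorem~\ref{thm.GIT} is constant along the castling recursion (case (c) in its definition) and equals $\dim \PP(V)\GITquot G$, while $\Delta$ and $\gmax$ are likewise invariant along the recursion by Lemma~\ref{lem.recursion}; so it suffices to analyze the terminal vector $\evec$, or equivalently the vector $\avec = (a_1,\dots,a_m)$ obtained from $\evec$ by deleting all $1$'s. By Remark~\ref{rem.ones} we then have $m \geqslant 2$, $2 \leqslant a_1 \leqslant \dots \leqslant a_m$, $\Delta(\dvec)=\Delta(\avec)$, $\gmax(\dvec)=\gmax(\avec)$, and $\avec$ satisfies the defining inequality of case (a), (b), or (d) as an $m$-tuple; in particular case (d) forces $m\geqslant 3$.

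The technical heart is a computation of $\Delta(\avec)$ in each terminal case; write $P = a_1\cdots a_{m-1}$. In case (a), putting $a_m = P+\alpha$ with $\alpha \geqslant 1$, a direct expansion gives $\Delta(\avec) = -\alpha P - \alpha^2 - 1 + m - \sum_{i<m} a_i^2 \leqslant -3m \leqslant -6$. In case (b), one gets $\Delta(\avec) = (m-1) - \sum_{i<m} a_i^2$, which equals $-3$ precisely when $\avec=(2,2)$ and is $\leqslant -6$ otherwise. In case (d), the claim is that either $\avec = (2,a,a)$ for some $a\geqslant 2$, in which case $\Delta(\avec) = -2$, or else $\Delta(\avec)\geqslant 2$. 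This last point is the crux. For $m=3$: the inequality $a_3 \leqslant \tfrac{1}{2} a_1 a_2$ together with $a_3\geqslant a_2$ forces $a_3 = a_2$ when $a_1=2$ (the exceptional family), and when $a_1\geqslant 3$ one writes $\Delta(\avec) = a_3(a_1a_2-a_3) - a_1^2 - a_2^2 + 2$, notes that $a_3\mapsto a_3(a_1a_2-a_3)$ is a downward parabola on $[a_2,\tfrac{1}{2} a_1a_2]$ hence minimized at $a_3=a_2$, and gets $\Delta(\avec)\geqslant a_2^2(a_1-2) - a_1^2 + 2 \geqslant 2$ using $a_1\geqslant 3$ and $a_2\geqslant a_1$. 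For $m\geqslant 4$ (where the exceptional family cannot arise) one again views $\Delta(\avec)$ as a downward parabola in $a_m$ on $[a_{m-1},\tfrac{1}{2} P]$, evaluates at the two endpoints $a_m=a_{m-1}$ and $a_m=\tfrac{1}{2} P$, bounds $\sum_{i<m}a_i^2\leqslant(m-1)a_{m-1}^2$, and finishes with the elementary inequalities $4^{m-3}\geqslant m-1$ and $2^{m-2}\geqslant m$ (both valid for $m\geqslant 4$), obtaining $\Delta(\avec)\geqslant m-1\geqslant 3$.

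Granting these bounds, the four parts follow by elimination. If $\Delta(\dvec)<-5$, the terminal vector cannot be in case (d), so it lies in case (a) (quotient empty) or (b) (quotient a point). If $-5\leqslant\Delta(\dvec)<-2$, it lies in neither case (a) nor case (d), hence in case (b), and the quotient is a point. If $\Delta(\dvec)=-2$, it lies in case (d), necessarily in the exceptional form $\avec=(2,a,a)$, so $\evec=(1,\dots,1,2,a,a)$ with $a=\gmax(2,a,a)=\gmax(\dvec)$, and Theorem~\ref{thm.GIT}(d) gives $\dim\PP(V)\GITquot G = D(\evec)$, equal to $0$ when $a=2$ and to $a-3$ when $a\geqslant 3$. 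If $\Delta(\dvec)>-2$, the terminal vector is in case (d) but not of the exceptional form, so $\evec\neq(1,\dots,1,2,a,a)$, the bound gives $\Delta(\dvec)=\Delta(\avec)\geqslant 2$, and Theorem~\ref{thm.GIT}(d) yields $\dim\PP(V)\GITquot G = D(\evec)=\Delta(\evec)=\Delta(\dvec)$. The one genuine obstacle is the case (d) lower bound $\Delta(\avec)\geqslant 2$: everything else is bookkeeping, but this estimate is where the hypothesis $a_m\leqslant\tfrac{1}{2} a_1\cdots a_{m-1}$ must really be used, and it requires separating off the exceptional family $(2,a,a)$ when $m=3$ and running the parabola-at-the-endpoints argument when $m\geqslant 4$.
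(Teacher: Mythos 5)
Your proposal is correct and follows essentially the same route as the paper's proof: reduce to the terminal vector, strip the $1$'s, and establish the dichotomy ``case (a) $\Rightarrow \Delta\leqslant -6$, case (b) $\Rightarrow \Delta<-2$, case (d) $\Rightarrow \Delta=-2$ exactly for $(2,a,a)$ and $\Delta\geqslant 2$ otherwise,'' with the same parabola-in-$a_m$ estimate and the same split into $m=3$ versus $m\geqslant 4$. The only differences are cosmetic (you substitute $a_m=P+\alpha$ where the paper evaluates $\phi(P+1)$, and you check both endpoints of the interval where the paper uses only the left one), so the arguments are essentially identical.
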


\begin{proof}
Let $\avec  = (a_1, \ldots, a_m)$ be obtained
by removing all 1s from $\evec = (e_1, \dots , e_n)$,
as in the previous section.
By Remark~\ref{rem.ones}, $m \geqslant 2$.
By Lemma~\ref{lem.recursion}, $\Delta(\dvec ) =
\Delta(\evec ) = \Delta(\avec )$
and $\gmax( \dvec ) = \gmax( \evec ) = \gmax( \avec )$.
Moreover, one readily sees that $\avec $ is also a terminal vector
for the recursive procedure of Theorem~\ref{thm.GIT}, and that
$\evec $ and $\avec $ correspond to the same
terminal case in Theorem~\ref{thm.GIT}, i.e., case (a), (b) or (d).
Thus, for the purpose of proving Proposition~\ref{prop.terminal}, we may
replace $\dvec $ by $\avec $. That is, we may assume that
$\dvec  = (d_1, \dots, d_n)$ is terminal, $2 \leqslant d_1 \leqslant
\cdots \leqslant d_n$, and $n \geqslant 2$.

Set $P = d_1 \cdots d_{n-1}$.  To prove the proposition, we will show that

\smallskip
(i) if $\dvec $ corresponds to case (a), i.e. $d_n > P$,
then $\Delta(\dvec ) < - 5$,

\smallskip
(ii) if $\dvec $ corresponds to case (b), i.e. $d_n = P$,
then $\Delta(d_1, \dots, d_n) < - 2$,

\smallskip
(iii) if $\dvec  = (2, a, a)$, then $\Delta( \dvec ) = -2$
and $\gmax( \dvec ) = a$.

\smallskip
(iv) if $\dvec $ corresponds to case (d), i.e. $d_n \leqslant P/2$,
and moreover $\dvec  \ne (2, a, a)$ for any integer $a \geqslant 2$,
then $\Delta(\dvec ) \geqslant 2$.

\smallskip
If we can establish (i) - (iv), Proposition~\ref{prop.terminal} will follow
directly from Theorem~\ref{thm.GIT}.

To prove (i), let us fix ${d}_1, \dots, {d}_{n-1}$ and
view
\[ \phi(x) = \Delta({d}_1, \dots, {d}_{n-1}, x)
= x P - d_1^2 - \ldots - d_{n-1}^2  -
x^2 + n -1 \]
as a quadratic polynomial in $x$. Note that $f'(x) = P - 2x$, so
$\phi(x)$ is increasing for $x \leqslant P/2$ and decreasing
for $x \geqslant P/2$.  In particular, if ${d}_n \geqslant P + 1$, then
\[
\Delta(\dvec ) = \phi({d}_n) \leqslant \phi(P + 1) =
-P - 1 -\sum_{i=1}^{n-1} (d_i^2 - 1) \, .
\]
Since $n \geqslant 2$, each $d_i \geqslant 2$ and in particular,
$P \geqslant {d}_1 \geqslant 2$. This yields
\[ \Delta(\dvec ) \leqslant - 2 - 1  - (2^2 - 1) = -6, \]
as claimed.

To prove (ii), assume $d_n = P$. Then
$\Delta(\dvec ) = \phi(P) = - \displaystyle{\sum_{i=1}^{n-1}}
(d_i^2 - 1) < - 2$,
since $n \geqslant 2$ and each $d_i \geqslant 2$.

(iii) is easy: $\Delta(2, a, a) = 2 a^2 - 4 - a^2 -
a^2 + 3 - 1= -2$ by the definition of $\Delta$,
and $\gmax( 2, a, a) = a$ by the definition of $\gmax$.

To prove (iv), assume that
$2 \leqslant d_1 \leqslant \dots \leqslant d_{n-1} \leqslant
{d}_{n} \leqslant P/2$.
We are interested in the value of $\Delta(\dvec ) = \phi({d}_n)$.
Since $f'(x) \leqslant 0$ for any $x$ in the interval
$[{d}_{n-1}, P/2]$, we have
\[ \Delta(\dvec ) = \phi(d_n) \geqslant \phi({d}_{n-1}) =
{d}_{n-1}P - {d}_1^2 - ... - {d}_{n-2}^2 - 2 d_{n-1}^2 + n -1 \, .  \]
Remembering that $P = d_1 \cdots d_{n-1}$ and
${d}_i \leqslant {d}_{n-1}$ for $i = 1, \ldots, n-1$,
we conclude that
\begin{equation} \label{e.delta2}
\Delta(\dvec ) \geqslant {d}_{n-1}P - n ({d}_{n-1})^2 + n-1 =
{d}_{n-1}^2 ({d}_{1} \dots {d}_{n-2} - n) + n-1 \, .
\end{equation}
By our assumption, $n \geqslant 3$ and $(d_1, \dots, d_n) \neq (2, d, d)$
for any $d \geqslant 2$. Let us now consider two cases.

\smallskip
Case 1: $n \geqslant 4$. In this case
$d_{1} \dots d_{n-2} - n \geqslant 2^{n - 2} - n \geqslant 0$
and~\eqref{e.delta2} tells us that
$\Delta(\dvec ) \geqslant n - 1 \geqslant 3$.

\smallskip
Case 2: $n = 3$ but $(d_1, d_2, d_3) \neq (2, d, d)$ for any $d \geqslant 2$.
Here our assumption that $d_3 \leqslant P/2 = d_1 d_2/2$ implies
$d_1 \geqslant 3$.  In this case~\eqref{e.delta2} yields
$\Delta(\dvec ) \geqslant d_2^2 (d_1 - 3) + 2$,
and $d_1 \geqslant 3$ implies $\Delta(\dvec  ) \geqslant 2$.
This completes the proof of (iv) and thus of Proposition~\ref{prop.terminal}.
\end{proof}

\begin{proof}[Proof of
Theorem~\ref{thm:main.theorem,numerical.formulation}, second part]
To prove the second assertion of Theorem~\ref{thm:main.theorem,numerical.formulation}, let us consider
three cases, where $\Delta(\dvec ) < -2$, $\Delta (\dvec ) = -2$, and
$\Delta (\dvec ) > -2$, respectively.

When $\Delta (\dvec ) > -2$, Proposition~\ref{prop.terminal}(4) tells us
that the recursion terminates on case (d) and
the dimension of $\PP(V) \GITquot G$ is $\Delta(\dvec ) \geqslant 2$.
In this case $R > 0$ By Proposition~\ref{prop:Rcases}.

When $\Delta (\dvec ) = -2$, we are in case (3)
of Proposition~\ref{prop.terminal}.
Here the recursion terminates on case (d) and
the dimension of $\PP(V) \GITquot G$ is
$(\gmax(\dvec) - 3)$ for $\gmax(\dvec) \geqslant 3$ and
$0$ otherwise.

Finally, when $\Delta(\dvec ) < -2$ we are in case (1) or (2) of
Proposition~\ref{prop.terminal}.  The proposition tells us that that
the recursion terminates on case (a) or (b) of Theorem~\ref{thm.GIT}.
By Proposition~\ref{prop:Rcases},
the recursion terminates in case (a) if $R < 0$ and in case (b) if $R = 0$.
Combining this with Theorem~\ref{thm.GIT}, we see that
$\PP(V) \GITquot G$ is a single point if $\Delta(\dvec ) < -2$ and $R = 0$
and is empty if $\Delta (\dvec ) < -2$ and $R = 0$, as desired.
\end{proof}

\section{Examples}

To conclude, we describe a few explicit results implied by Theorems \ref{thm:main.theorem,numerical.formulation} and \ref{thm.GIT}.

\begin{corollary} \label{cor.d=2}
For dimension vectors $(2,d_2,d_3)$, with $2 \leqslant d_2 \leqslant d_3$,
the quotient $\mathbb P(V) \GITquot  G$ is non-empty if and only if

\smallskip
(i) $(d_1,d_2,d_3) = (2,b,b)$ for $b \geqslant 2$
or

\smallskip
(ii) $(d_1, d_2, d_3) = (2,kb,(k+1)b)$ for positive
integers $k,b$ with $kb > 1$.

\smallskip
In case (i), the quotient $\mathbb P(V)\GITquot G$ is
of dimension $\max(b - 3, 0)$.
In case (ii), $\mathbb P(V)\GITquot G$ is a single point.
\end{corollary}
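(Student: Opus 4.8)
The plan is to read Corollary~\ref{cor.d=2} off Theorem~\ref{thm:main.theorem,numerical.formulation}, after evaluating the arithmetic functions $\Delta$, $R$ and $\gmax$ on $\dvec = (2, d_2, d_3)$. The key preliminary computation is
\[
\Delta(2, d_2, d_3) = 2 d_2 d_3 - 2 - d_2^2 - d_3^2 = -2 - (d_3 - d_2)^2 \, ,
\]
so that $\Delta(\dvec) \leqslant -2$ always, with equality exactly when $d_2 = d_3$. Theorem~\ref{thm:main.theorem,numerical.formulation} then splits the problem into two regimes: when $d_2 = d_3$ we are in the case $\Delta = -2$, and when $d_2 < d_3$ we have $\Delta < -2$, so $R(\dvec) \leqslant 0$ and the quotient is empty if $R(\dvec) < 0$ and a single point if $R(\dvec) = 0$.

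When $d_2 = d_3 =: b$, Theorem~\ref{thm:main.theorem,numerical.formulation} says the quotient is non-empty of dimension $\max(\gmax(\dvec) - 3, 0)$. Since $\gmax(2, b, b) = b$ (as $b \geqslant 2$), this is precisely case (i) together with its dimension formula.

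When $d_2 < d_3$, combining the first part of Theorem~\ref{thm:main.theorem,numerical.formulation} (the quotient is non-empty if and only if $R \geqslant 0$) with the above reduces everything to the arithmetic claim: for $2 \leqslant d_2 < d_3$, $R(2, d_2, d_3) = 0$ if $\gcd(d_2, d_3) = d_3 - d_2$, and $R(2, d_2, d_3) < 0$ otherwise. Writing $\delta = d_3 - d_2$ and $g = \gcd(d_2, d_3)$, one computes
\[
R(2, d_2, d_3) = - \delta^2 - 4 + \gcd(2, d_2)^2 + \gcd(2, d_3)^2 + g^2 - \gcd(2, g)^2 \, .
\]
Since $g \mid \delta$, we have $\delta \geqslant g$, with equality precisely when $d_2 = k g$, $d_3 = (k+1) g$ for the integer $k = d_2/g \geqslant 1$. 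I would then do a short case analysis on the parity of $g$ (which pins down the parities of $d_2$ and $d_3$): when $\delta = g$ the terms combine to give $R = 0$, and when $\delta > g$, so $\delta^2 \geqslant 4 g^2$, a crude bound using $\gcd(2, d_2)^2 + \gcd(2, d_3)^2 \leqslant 8$ forces $R < 0$. Finally, $\gcd(d_2, d_3) = d_3 - d_2$ holds if and only if $\dvec = (2, kb, (k+1)b)$ for positive integers $k, b$ with $kb > 1$ (take $b = \gcd(d_2, d_3)$ and $k = d_2/b$), so for $d_2 < d_3$ the quotient is non-empty exactly in case (ii), and then it is a single point.

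I do not expect a real obstacle; this is a routine application of Theorem~\ref{thm:main.theorem,numerical.formulation} once $\Delta$ and $R$ are computed. The one place that needs care is the parity bookkeeping in the formula for $R(2, d_2, d_3)$, since $\gcd(2, d_2)$, $\gcd(2, d_3)$ and $\gcd(2, g)$ behave differently according to the parity of $g$, and one must check that the boundary cases $k = 1$ (where $d_3 = 2 d_2$) and $b = 1$ land correctly inside case (ii). As a consistency check one can instead run the castling recursion of Theorem~\ref{thm.GIT} directly on $(2, d_2, d_3)$: the first entry $2$ persists under part~(c), the difference $d_3 - d_2$ is invariant under the recursion, and termination occurs in case~(b) (a single point) exactly when this difference divides the middle entry and in case~(a) (empty) otherwise, giving the same dichotomy.
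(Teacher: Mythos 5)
Your proposal is correct and reaches the stated conclusion, but by a genuinely different route from the paper. The paper handles case (i) via Proposition~\ref{prop.terminal}(3) (essentially as you do), but proves case (ii) and the converse by running the castling recursion of Theorem~\ref{thm.GIT} explicitly, $(2,kb,(k+1)b)\mapsto(2,(k-1)b,kb)\mapsto\cdots\mapsto(2,b,2b)$, classifying the possible terminal triples $(2,b,b)$, $(2,b,2b)$, $(1,2,2)$, and reversing the recursion; the direct evaluation of $R$ appears there only as a parenthetical consistency check. You instead make the numerical criterion of Theorem~\ref{thm:main.theorem,numerical.formulation} primary, which rests on the identity $R(2,d_2,d_3)=-\delta^2-4+\gcd(2,d_2)^2+\gcd(2,d_3)^2+g^2-\gcd(2,g)^2$ (correct) and the arithmetic claim that, for $d_2<d_3$, $R=0$ iff $\delta=g$ and $R<0$ otherwise. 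Your route buys a self-contained arithmetic characterization; the paper's buys an explicit description of the castling orbit. One step to tighten: when $\delta>g$ and $g=1$, the crude bound $\gcd(2,d_2)^2+\gcd(2,d_3)^2\leqslant 8$ combined with $\delta^2\geqslant 4g^2$ only yields $R\leqslant -3g^2+4-\gcd(2,g)^2=0$, not $R<0$; here you genuinely need the parity analysis you flag (if $d_2,d_3$ are both odd the two gcd-with-$2$ terms contribute $1+1$ rather than $8$, and if one is even then $\delta$ is odd, hence $\delta\geqslant 3$), after which $R<0$ does follow in all subcases. Your closing ``consistency check'' via the recursion is, in effect, the paper's actual proof.
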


\begin{proof} First assume that $(d_1, d_2, d_3)$ is as in (i) and (ii).
Note that $\Delta(2, d_2, d_3) = -(d_3 - d_2)^2 - 2$. In particular, in
case (i), $\Delta(d_1, d_2, d_3) = -2$, and the desired conclusion follows from
Proposition~\ref{prop.terminal}(3). In case (ii),
the recursive procedure of Theorem~\ref{thm.GIT} yields
\begin{equation} \label{e.terminal}
(2, kb, (k+1)b) \mapsto (2, (k-1)b, kb) \mapsto \dots
\mapsto (2, 2b, 3b) \mapsto (2, b, 2b) \, .
\end{equation}
The terminal triple $(2, b, 2b)$ is covered by Theorem~\ref{thm.GIT}(b),
for any $b \geqslant 1$, except that for $b= 1$, we should write it as
$(1, 2, 2)$, rather than $(2, 1, 2)$. (Note that we can also
check directly that $R(\dvec) = b^2 - 4 + \gcd(2, b)^2 > 0$ in case (i)
and $R(\dvec) = 0$ in case (ii).)

Conversely, suppose $\mathbb P(V)\GITquot G$ is non-empty
for some dimension vector
$(2, d_2, d_3)$. Denote the terminal triple by $\evec   = (e_1, e_2, e_3)$.
Then either $e_1 = 1$ and $e_2 = 2$, or $e_1 = 2$. Moreover, either
$e_1 e_2 = e_3$, as in Theorem~\ref{thm.GIT}(b) or $e_3 \leqslant e_1 e_2 /2$,
as in Theorem~\ref{thm.GIT}(d). This leaves us with

\smallskip
(1) $(e_1, e_2, e_3) = (2, b, b)$ or

\smallskip
(2) $(e_1, e_2, e_3) = (2, b, 2b)$, where $b \geqslant 2$ or

\smallskip
(3) $(e_1, e_2, e_3) = (1, 2, 2)$.

In cases (2) and (3), we recover
$(d_1, d_2, d_3) = (2, kb, (k+1) b)$, for $b \geqslant 2$ and $b = 1$,
respectively, by reversing the recursive procedure~\eqref{e.terminal}.
\end{proof}

\begin{remark}
For $\{n=3,d_1>2\}$ or $n \ge 3$, the naive dimension $\Delta(d_1, \dots d_n)$ considered as a function of $d_n$ is a downwards parabola that is positive
for $d_n = d_{n-1}$, increases to a maximum
at $d_n = {1 \over 2} d_1 \cdots d_{n-1}$, and then
decreases to $-2$ at some $d_* \in (P/2,P)$ where $P = d_1 \cdots d_{n-1}$.
Thus, by Proposition~\ref{prop.terminal}, the quotient is nonempty and has dimension $\Delta(d_1, \dots d_n)$
for all $d_n$ in the range $[d_{n-1},d_*)$. If $d_*$ is an integer,
the quotient is non-empty for $d_n = d_*$ and has dimension governed
by case (3) of Proposition~\ref{prop.terminal}. The remaining values
of $d_n$ for which the quotient is nonempty are a set of sporadic cases
with $d_* < d_n \leqslant P$ satisfying $R(\dvec) = 0$ for which the quotient is a point. We provide
a more detailed analysis of these sporadic cases for $n=3$ in~\cite{sam}.
\end{remark}

\section*{Acknowledgments} We are grateful to Omer Angel, Jason Bell,
Michel Brion, Shrawan Kumar, Samuel Leutheusser, Tomasz
Maci\c{a}\.{z}ek, Greg Martin, Vladimir L. Popov,
and Michael Walter for stimulating discussions.

\end{document}